\newtheorem{proposition}{Proposition}
\journal{Journal of Computational Physics}
\begin{document}

\begin{frontmatter}

%% Title, authors and addresses

%% use the tnoteref command within \title for footnotes;
%% use the tnotetext command for theassociated footnote;
%% use the fnref command within \author or \address for footnotes;
%% use the fntext command for theassociated footnote;
%% use the corref command within \author for corresponding author footnotes;
%% use the cortext command for theassociated footnote;
%% use the ead command for the email address,
%% and the form \ead[url] for the home page:
%% \title{Title\tnoteref{label1}}
%% \tnotetext[label1]{}
%% \author{Name\corref{cor1}\fnref{label2}}
%% \ead{email address}
%% \ead[url]{home page}
%% \fntext[label2]{}
%% \cortext[cor1]{}
%% \affiliation{organization={},
%%             addressline={},
%%             city={},
%%             postcode={},
%%             state={},
%%             country={}}
%% \fntext[label3]{}

\title{Numerical Schemes for 3-Wave Kinetic Equations: A Complete Treatment of the Collision Operator\tnoteref{t1}}
\tnotetext[t1]{The authors are  funded in part by  the  NSF RTG Grant DMS-1840260, NSF Grants DMS-1854453, DMS-2204795, DMS-2305523,    Humboldt Fellowship,   NSF CAREER  DMS-2044626/DMS-2303146, DMS-2306379. The first author gratefully acknowledges the support of the NNSA through the Laboratory Directed Research and Development
(LDRD) program at Los Alamos National Laboratory under project number 20220174ER. Los Alamos National Laboratory is
operated by Triad National Security, LLC for the U.S. Department of Energy’s NNSA.\\
LANL unlimited release number LA-UR-24-21681}
% use optional labels to link authors explicitly to addresses:
\author[1]{Steven Walton\corref{cor1}}
\ead{stevenw@lanl.gov}

\author[2]{Minh-Binh Tran}
\ead{minhbinh@tamu.edu}

\cortext[cor1]{Corresponding author}

\address[1]{{T-5, Theoretical Division, Los Alamos National Laboratory, Los Alamos, NM, USA}
}

\address[2]{{Texas A \& M University,
            College Station, TX, USA}}

% \author{Steven Walton \and Minh-Binh Tran}

% \affiliation{organization={},%Department and Organization
%             addressline={}, 
%             city={},
%             postcode={}, 
%             state={},
%             country={}}

\begin{abstract}
%% Text of abstract
    In our previous work \cite{waltontranFVS}, numerical schemes for a simplified version of 3-wave kinetic equations, in which  only the simple forward-cascade terms of the collision operators are kept, have been successfully designed, especially to capture the long time dynamics of the equation given the multiple blow-up time phenomenon. In this second work in the series, we propose numerical treatments for the complete 3-wave kinetic equations, in which the complete, much more complicated  collision operators are fully considered  based on a  novel conservative form of the  equation.  We then derive an implicit finite volume scheme to solve  the  equation. The new discretization uses an adaptive time-stepping method which allows for the simulations to be carried to very long times.  Our computed solutions are compared with previously derived long-time asymptotic estimates for the decay rate of total energy of time-dependent solutions of 3-wave kinetic equations and found to be in excellent agreement.
\end{abstract}

%%Graphical abstract
% \begin{graphicalabstract}
% %\includegraphics{grabs}
% \end{graphicalabstract}

%%Research highlights
% \begin{highlights}
% \item Research highlight 1
% \item Research highlight 2
% \end{highlights}

\begin{keyword}
%% keywords here, in the form: keyword \sep keyword

%% PACS codes here, in the form: \PACS code \sep code

%% MSC codes here, in the form: \MSC code \sep code
%% or \MSC[2008] code \sep code (2000 is the default)
Wave Turbulence, Finite Volume Methods, Kinetic Equations, Numerical Analysis

\end{keyword}

\end{frontmatter}

%% \linenumbers

%% main text
\section{Introduction}
The theory of  wave turbulence has been  studied extensively in the physical literature, based on  the kinetic description of  weakly interacting waves (see, for instance,  \cite{Peierls:1993:BRK,Peierls:1960:QTS,hasselmann1962non,hasselmann1974spectral,benney1966nonlinear,zakharov2012kolmogorov,benney1969random} and the books \cite{Nazarenko:2011:WT,PomeauBinh,zakharov2012kolmogorov}).

Our current work is the second paper in our series of works that numerically study the so-called 3-wave kinetic equation (see  \cite{waltontranFVS} for the first part of our work)
\begin{equation}\label{WeakTurbulenceInitial}
	\begin{aligned}
		\partial_tf(t,p) \ =& \ \mathcal{Q}[f](t,p), \\\
		f(0,p) \ =& \ f_0(p).
	\end{aligned}
\end{equation}
The non-negative quantity  $f(t,p)$ is the wave density  at  wavenumber $p\in \mathbb{R}^N$, $N \ge 2$; $f_0(p)$ describes the initial condition.  The operator $\mathcal{Q}[f]$ is the integral collision operator, which reads
\begin{equation}\label{def-Qf}{\mathcal Q}[f](p) \ = \ \iint_{\mathbb{R}^{2N}} \Big[ R_{p,p_1,p_2}[f] - R_{p_1,p,p_2}[f] - R_{p_2,p,p_1}[f] \Big] \mathrm d ^Np_1\mathrm d ^Np_2 \end{equation}
with $$\begin{aligned}
	R_{p,p_1,p_2} [f]:=  |V_{p,p_1,p_2}|^2\delta(p-p_1-p_2)\delta(\omega -\omega_{1}-\omega_{2})(f_1f_2-ff_1-ff_2) 
\end{aligned}
$$
with the short-hand notation $f = f(t,p)$, $\omega = \omega(p)$ and $f_j = f(t,p_j),$ $\omega_j = \omega(p_j)$, for wavenumbers $p$, $p_j$, $j\in\{1,2\}$. The function $\omega(p)$ is the dispersion relation of the waves.
The form of the collision kernel $V_{p,p_1,p_2}$  depends on the type of wave system under consideration.

While most of the works in the physical literature focuses on  the existence of the so-called  Kolmogorov-Zakharov spectra, which is a class of {\it time-independent solutions}  of equation \eqref{WeakTurbulenceInitial}.
On the other hand, there are fewer works that study the  {\it time-dependent solutions} of 3-wave kinetic equations \eqref{WeakTurbulenceInitial}. The rigorous study of solutions to time dependent wave kinetic equations is now an important research direction, see, for instance  \cite{AlonsoGambaBinh,EscobedoBinh,GambaSmithBinh,nguyen2017quantum,ToanBinh,RumpfSofferTran,CraciunSmithBoldyrevBinh} for works concerning 3-wave kinetic equations, and  \cite{collot2024stability,EscobedoVelazquez:2015:FTB,EscobedoVelazquez:2015:OTT,germain2023local,germain2017optimal,SofferBinh1,staffilani2024energy,staffilani2024condensation} for works concerning 4-wave kinetic equation. {Among the very few works that numerically study   the time-dependent solutions of 3-wave kinetic equations, we would like to mention the series of remarkable works  \cite{connaughton2009numerical,connaughton2010aggregation,connaughton2010dynamical}. The goal of those works is to recover the ZK solutions, under several assumptions manually imposed on the evolution of the time-dependent solutions themselves.} In these works, the authors study the evolution equation \eqref{WeakTurbulenceInitial} with the following form of the 3-wave collision operator
\begin{align}\label{EE1Colm}
	\begin{split}
		\mathbb{{Q}}[f](t,\omega) \ =& \ \int_0^\infty\int_0^\infty \big[R(\omega, \omega_1, \omega_2)-R(\omega_1,\omega, \omega_2)-R(\omega_2, \omega_1, \omega) \big]{\mathrm d }\omega_1{\mathrm d }\omega_2, \\\
		& R(\omega, \omega_1, \omega_2):=  \delta (\omega-\omega_1-\omega_2)
		\left[ U(\omega_1,\omega_2)f_1f_2-U(\omega,\omega_1)ff_1-U(\omega,\omega_2)ff_2\right]\,,
	\end{split}
\end{align}
where   $| U(\omega_1,\omega_2) |  \ = \ (\omega_1\omega_2)^{\sigma/2}$.
In \cite{connaughton2010aggregation,connaughton2010dynamical}, the solutions are imposed to follow the   {\it dynamic scaling hypothesis}
$
	f(t,\omega)\approx s(t)^a F\left(\frac{\omega}{s(t)}\right),
$
whose energy is now
\begin{equation}\label{EnergyColm}
	\begin{aligned}
		\int_{0}^\infty \omega f(t,\omega)\mathrm d \omega\ = & \ \int_0^\infty s(t)^a F\left(\frac{\omega}{s(t)}\right)\omega \mathrm d \omega \ =  \  s(t)^{a+2} \int_0^\infty x F\left(x\right) \mathrm d x,
	\end{aligned}
\end{equation}
which has a growth $s(t)^{a+2}$.
{Using equation  \eqref{WeakTurbulenceInitial}-\eqref{EE1Colm}, } and equation for $F$ can also be computed
\begin{equation}
	\begin{aligned}
		& \dot{s}(t) \ =  \ s^\xi, \mbox{ with } \xi = \sigma + a + 2, \ \ 
		  aF(x) \ + \ x\dot{F}(x) \ =  \ \bar{\mathbb{Q}}[F](x).
	\end{aligned}
\end{equation}
From \eqref{EnergyColm}, the conservation of energy is obtained when  { $a=-2$.} By assuming  {\it the polynomial hypothesis} $F(x)\approx x^{-n}$, when $x\approx 0$, the quantity $n$ can be computed  $n=\sigma+1$. {  Hence,  the integral $\int_0^\infty xF(x) \mathrm dx$} becomes singular if $\sigma\ge 1$.\\
In \cite{connaughton2010dynamical},   $\sigma$ is considered in the interval $[0,2]$ and the solutions follow the so-called {\it forced turbulence hypothesis}  \begin{equation}\label{ColmEnergy2}
	\int_{0}^\infty \omega f(t,\omega)\mathrm d \omega=Jt,
\end{equation}
yielding $\dot{s} = \frac{J}{(a+2)\int_0^\infty xF(x)\mathrm d x}s^{-1-a}$ and then $a=-\frac{\sigma+3}{2}$.  Since $\int_0^\infty xF(x) dx$ with $F(x)\approx x^{{-\frac{\sigma+3}{2}}}$ for small $x$  diverges when {$\sigma\ge 1$}, many  approximations have to be imposed and the energy has the growth $s(t)^{a+2}$.

{It is therefore important that the evolution of time-dependent solutions of \eqref{WeakTurbulenceInitial} is studied numerically, under no further  assumptions on the solutions themselves. The  work  \cite{soffer2019energy} is among  the very first works that starts the study of {\it  time-dependent solutions, without further hypotheses}.  In \cite{soffer2019energy}, the case $\sigma=2$, which corresponds to capillary waves and acoustic waves, is considered, in which
\begin{equation}\label{EE1}
	|V_{p,p_1,p_2}|^2 \ = \ |p||p_1||p_2|, \ \ \ N=3  \mbox{ and } \omega(p)=|p| .\end{equation} 
In this work, a class of isotropic solutions $f(t,p)=f(t,|p|)$ to \eqref{WeakTurbulenceInitial}-\eqref{EE1} are studied, in which, the initial condition is radial 
$f_0(p)=f_0(|p|)$.
It is proved in \cite{soffer2019energy} that when no ad-hoc assumptions are imposed on the evolution of the solutions, the ZK solutions cannot be recovered. However, a different phenomenon can be observed, as we described below.
	\begin{itemize}
		\item[(i)]  The energy on the interval $[0,\infty)$ is  non-increasing in time and for all time $\tau_1>0$, there is a  larger time $\tau_2>\tau_1$ such that  
		$$\int_{[0,\infty)}f(\tau_2,|p|)\omega_{|p|} |p|^2\mathrm d \mu(p) \ < \ \int_{[0,\infty)}f(\tau_1,|p|)\omega_{|p|}|p|^2\mathrm d \mu(|p|),$$	 
		where $\mu$ denotes the measure of the extended line $[0,\infty]$ (see \cite{soffer2019energy}).
		\item[(ii)] For all $\epsilon\in(0,1)$, there exists $R_\epsilon>0$ such that
		$$\int_{\left[0, R_\epsilon \right)}f(t,|p|)\omega_{|p|}|p|^2\mathrm d \mu(|p|) \ =  \  \mathcal O(\epsilon), \ \ \   t\in[0,\infty).$$	
		\item[(iii)]  The energy cascade has an explicit rate
		$$\int_{\{|p|=\infty\}}f(t,|p|) \omega_{|p|}|p|^2\mathrm d \mu(|p|)\ \ge \ {C}_1 \ - \  \frac{{C}_2}{\sqrt{t}},$$
		where ${C}_1>0$ and ${C}_2>0$ are explicit constants.  The above inequality yields 	\begin{equation}\label{Decomposition0}\int_{0}^L f(t,|p|) \omega_{|p|}|p|^2\mathrm d \mu(|p|)\ \	\le \   \mathcal{O}\Bigg(\frac{1}{\sqrt{t}}\Bigg),\mbox{ for any $L>0.$}
		\end{equation}
        Note that the measure at infinity $\{|p|=\infty\}$ can be defined by using the  ``extended half real line'' $[0,\infty] \ = \ [0,\infty)\cup\{ \infty\}.$ 
         The measure $\mathrm d \mu(|p|)$ is the extension of the classical Lebesgue measure \cite{soffer2019energy}.
	We also deduce from the above inequalities the multiple blow-up time phenomenon: There exists a sequence of times
	\begin{equation} \label{Decomposition1} 0< t^*_1<t^*_2<\cdots<t_n^*<\cdots,\end{equation} 
	such that 
		$$0<\int_{\{|p|=\infty\}}f(t^*_1,|p|) \omega_{|p|}|p|^2\mathrm d \mu(|p|)$$ $$ < \int_{\{|p|=\infty\}}f(t^*_2,|p|)\omega_{|p|}|p|^2\mathrm d \mu(|p|)<\cdots.$$
	 	\end{itemize}
        It is the goal of our work to verify the discoveries of \cite{soffer2019energy} numerically: to develop numerical schemes that can  capture (i)-(ii)-(iii). Since (iii) implies (i)-(ii), we will only need to numerically test (iii).}

{{\it 	In our first work in the series \cite{waltontranFVS}, a simplified version of \eqref{WeakTurbulenceInitial}, in which  only the forward-cascade term of the collision operator is kept} (see \cite{connaughton2009numerical} for the definition of the forward-cascade term of the collision operator), has been studied. Even though only the forward-cascade term of the collision operator is kept, the phenomenon described in (iii) can also be observed numerically in \cite{waltontranFVS}.}
        
 {\it The success of the numerical observations of the much simpler case considered in \cite{waltontranFVS} motivates us to carry on the much more complicated treatment of the full collision operator in the current work.}   Different from the previous work \cite{waltontranFVS}, since in the current work, the full form of the collision operator is considered, we need to develop a complete conservative  form of  3-wave kinetic equations.  {\it  To our knowledge, our derived  conservative form of  3-wave kinetic equations has not appeared previously in the physical literature. This is one of the main novelties of the current work. Additionally, our choice of numerical discretization to solve this new conservative form has not appeared in the wave turbulence literature.}  
 
  We would also like to highlight the works \cite{krstulovic2025wavkins, zhu2024turbulence}, in which a different approach to discretizing the collision operator has been proposed. By performing the change of variables \( k = k_{\text{min}} \lambda^x \), for some constant \( k_{\text{min}} > 0 \) and \( \lambda > 1 \), one can discretize and truncate in the variable \( x \) rather than in \( k \). This implies that the corresponding discretization in \( k \) becomes coarser for large values of \( k \), leading to a nonuniform grid in \( k \). 
      In addition, in the important work \cite{bell2017self}, in which a self-similar profile of the solution for the Alfv\`en wave
 turbulence kinetic equation is studied. The solution is computed  before the first blow-up time $t_1^*$. Another important work \cite{semisalov2021numerical} studies a numerical method for solving the  self-similar profile before the first blow-up time $t_1^*$  for   4-wave kinetic equation using Chebyshev approximations.    {\it In contrast to those works, our main focus is the study of the multiple blow-up   phenomenon  \eqref{Decomposition1} as well as the bound \eqref{Decomposition0}, rather than the self-similar profile before the first blow-up time $t_1^*$.}

 \subsection*{Outline}
 The remainder of the article is structured as follows.  In the next section, section \ref{sec::conservative_form_derivation}, we give a derivation of the conservative form of the 3-wave kinetic equation.  This is followed in section \ref{sec::discretization} by the description of our implicit finite volume method with which we solve this new conservation law.  In section \ref{sec::num_tests}, the finite volume method is used to solve the evolution equation with various initial energy density distributions.  The obtained results are found to be in good agreement with the work \cite{soffer2019energy} already discussed in this introduction.  We conclude by summarizing our contributions and positing future directions in the numerical study of the time dynamic solutions of 3-wave kinetic equations.   
 
\section{Conservative Form of 3-WKEs}\label{sec::conservative_form_derivation}
This section presents the main result of the current work in Proposition \ref{prop::conservative_form}, in which we derive a conservative form of \eqref{WeakTurbulenceInitial}. {Note that  numerical schemes that approximate integrals on  unbounded domains require the truncations of the unbounded domains to  finite domains. The role of the conservative form is to reduce the number of terms in the collision operators, which reduces the number of truncations needed in the approximation. } However, rather than the wavenumber density, $f(t,p)$, which is not conserved in the case of 3-wave interactions, we will work with the energy density, $g(t,p) = pf(t,p)$, which is conserved for 3-wave systems.  Thus we have the following Cauchy problem for the energy density
\begin{equation}\label{eqn::energy_density_Cauchy}
    \begin{aligned}
        \partial_t g(t,p) = |p| \mathcal Q\Big[ \frac{g}{|p|}\Big](t,p)\\
        g(0,p) = g_0(p).
    \end{aligned} 
\end{equation}
{
Below, we recall Proposition 6 of \cite{soffer2019energy}, whose proof can be found in  \cite{soffer2019energy}.
\begin{proposition}\label{Lemma:WeakFormulation}
For radially symmetric functions $f(p):=f(|p|)$, $g(p):=g(|p|)$ and $\phi(p):=\phi(|p|)$, the following holds true
\begin{align}\label{WeakFormulation:radialL}
\begin{split}
& \  \int_{\mathbb{R}^3}\,\mathcal Q\left[\frac{g(p)}{|p|}\right]|p|\phi \mathrm d^3p   =  \ 16\pi^2\int_{|p_1|>|p_2|\ge0}{|p_1||p_2|}g(|p_1|)g(|p_2|)\times \\
 & \  \  \  \  \  \  \  \  \  \  \  \  \times \Big[||p_1|+|p_2||^3\phi(|p_1|+|p_2|)-2(|p_1|^2+|p_2|^2)|p_1|\phi(|p_1|)\\
 & \  \  \  \  \  \  \  \  \  \  \  \   -4|p_1||p_2|^2\phi(|p_2|)+(|p_1|-|p_2|)^3\phi(|p_1|-|p_2|)\Big]\mathrm d|p_{1}|\,\mathrm d|p_2|\,\\
 & \  \  \  \  \  \  \  \  \  \  \  \   + 8\pi^2\int_{|p_1|=|p_2|\ge0}{|p_1||p_2|}g(|p_1|)g(|p_2|)(|p_1|+|p_2|)^2\Big[(|p_1|+|p_2|)\phi(|p_1|+|p_2|)\\
 &\  \  \  \  \  \  \  \  \  \  \  \  -|p_1|\phi(|p_1|)-|p_2|\phi(|p_2|)\Big]\mathrm d|p_{1}|\,\mathrm d|p_2|\,.
\end{split}
\end{align}
In the rest of the paper, for the sake of simplicity, we omit the factor $8\pi^2$.
\end{proposition}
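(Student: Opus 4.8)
The plan is to establish the identity by testing the collision operator against the radial weight $|p|\phi(p)$ and reducing the resulting nine-dimensional integral to the stated two-dimensional one. First I would write $\int_{\mathbb R^3}\mathcal Q[g/|p|]\,|p|\phi\,\mathrm d^3p$ as a single integral over $(p,p_1,p_2)\in\mathbb R^9$ against the two resonance delta factors and then symmetrize. Writing $\Phi(p):=|p|\phi(p)$, the three gain--loss pieces $R_{p,p_1,p_2}$, $R_{p_1,p,p_2}$, $R_{p_2,p,p_1}$ merge into one: in the second term relabel $p\leftrightarrow p_1$ and in the third relabel $p\leftrightarrow p_2$, using that $|V_{p,p_1,p_2}|^2=|p||p_1||p_2|$ and that both delta factors are invariant under these swaps. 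This collapses the integrand to $[\Phi(p)-\Phi(p_1)-\Phi(p_2)]\,R_{p,p_1,p_2}$, and substituting $f=g/|p|$ turns the quadratic factor $|p||p_1||p_2|(f_1f_2-ff_1-ff_2)$ into $|p|g_1g_2-|p_2|gg_1-|p_1|gg_2$.

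Next I would integrate out $p$ against $\delta(p-p_1-p_2)$, setting $p=p_1+p_2$, so the energy delta becomes $\delta(|p_1+p_2|-|p_1|-|p_2|)$, which forces $p_1\parallel p_2$. The gain piece $|p|g_1g_2$ already carries the product $g(|p_1|)g(|p_2|)$ and, after angular integration, should produce the $\phi(|p_1|+|p_2|)$ contribution together with part of the $\phi(|p_1|)$ and $\phi(|p_2|)$ terms. The two loss pieces still carry $g(|p_1+p_2|)$, so for each I would change variables to $q=p_1+p_2$ (keeping $p_1$, respectively $p_2$), turning $g(|p_1+p_2|)g_1$ into the genuine two-wavenumber product $g(|q|)g(|p_1|)$ whose third leg has magnitude $|q|-|p_1|$. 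Relabeling the larger magnitude as $|p_1|$, these transformed loss terms are exactly the source of the difference-frequency contribution $(|p_1|-|p_2|)^3\phi(|p_1|-|p_2|)$ and of the remaining corrections to the $\phi(|p_1|)$, $\phi(|p_2|)$ coefficients; collecting everything and restricting to $|p_1|>|p_2|$ by the $1\leftrightarrow 2$ symmetry should reproduce the four-term bracket with prefactor $16\pi^2$.

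The main obstacle is the angular integration, because for $\omega(p)=|p|$ the resonance is degenerate: $\delta(|p_1+p_2|-|p_1|-|p_2|)$ is supported exactly at the endpoint $\cos\theta=1$ of the angular domain $\mu\in[-1,1]$, where the zero of the argument sits on the boundary rather than in the interior. I expect the bulk of the effort to go into making this collinear reduction rigorous, most cleanly by thickening or regularizing the dispersion relation, computing with a non-degenerate resonance, and passing to the limit. It is this careful treatment that should simultaneously yield the regular bulk term over $|p_1|>|p_2|$ and a concentrated, gain-type contribution on the diagonal $|p_1|=|p_2|$ — the $8\pi^2$ integral — where the difference frequency $|p_1|-|p_2|$ vanishes so that no difference-frequency test value survives. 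Once the angular factors are pinned down, extracting the bracket coefficients is routine bookkeeping.
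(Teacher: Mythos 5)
Be aware first that the paper does not prove this proposition: it is recalled verbatim as Proposition 6 of \cite{soffer2019energy}, so your attempt can only be judged against the derivation in that reference, and your outline does follow the same standard route. The skeleton is correct: symmetrizing with $\Phi(p)=|p|\phi(p)$ collapses the three terms to $R_{p,p_1,p_2}\,[\Phi(p)-\Phi(p_1)-\Phi(p_2)]$ (strictly, the deltas are not invariant under the swaps --- the relabeling maps the deltas of the second and third terms onto those of the first, with the $b\leftrightarrow c$ symmetry of $R_{a,b,c}$ handling the last one); substituting $f=g/|p|$ yields $|p|g_1g_2-|p_2|gg_1-|p_1|gg_2$; and with $a=|p_1|$, $b=|p_2|$, the collinear reduction gives the weight $ab(a+b)$ times that density bracket. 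Your change of variables $s=a+b$ in the two loss terms (images of each other under $a\leftrightarrow b$) is exactly right, and the bookkeeping closes: the coefficient of $\phi(a)$ is $-a[(a+b)^2+(a-b)^2]=-2a(a^2+b^2)$, that of $\phi(b)$ is $b[(a-b)^2-(a+b)^2]=-4ab^2$, and $(a\pm b)^2\Phi(a\pm b)=(a\pm b)^3\phi(a\pm b)$, reproducing the four-term bracket, with the $16\pi^2$ arising from doubling the symmetric gain integral over $\{a>b\}\cup\{a<b\}$.

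Two points need repair. First, the endpoint degeneracy you flag is indeed the crux, but your proposed fix can silently lose a factor of $2$: a symmetric approximate identity applied to $\delta(|p_1+p_2|-|p_1|-|p_2|)$, whose root sits exactly at the boundary $\mu=1$ of the angular domain, captures only half the mass and would produce $8\pi^2$ and $4\pi^2$ in place of the stated constants. To match the convention under which \cite{soffer2019energy} defines $\mathcal Q$ (full weight on the collinear manifold), you must regularize so the resonance moves into the interior --- e.g.\ perturbing the dispersion to $\omega(p)=|p|^{1+\epsilon}$, for which the root of the angular equation is interior and carries full weight as $\epsilon\to 0^+$ --- not mollify the delta symmetrically in place. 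Second, your attribution of the diagonal term is off: it does not emerge from the angular regularization, but from splitting the symmetric gain integral into $\{a>b\}$, $\{a<b\}$ and $\{a=b\}$; the diagonal piece is nonvanishing only when $g$ carries atoms (the proposition is used for measure-valued solutions on $[0,\infty]$), and there the gain term is not doubled, whence the coefficient $8\pi^2$. You do correctly observe that the loss contributions drop out on the diagonal, because of the factor $(a-b)^2$ and $\Phi(0)=0$; indeed the diagonal bracket is the continuous limit of the bulk one, which is why it only matters in the atomic case.
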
}

We may now state the main result of the article.
\begin{proposition}\label{prop::conservative_form}
    The Cauchy problem for the energy density, equation \eqref{eqn::energy_density_Cauchy}, is equivalent to the conservation law, with $k = |p|$,
    \begin{equation}
        \begin{aligned}
            \partial_t g(t,k) -\partial_k \mathfrak Q[g](t,k) = 0 \\
            g(0,k) = g_0(k) ,
        \end{aligned}
    \end{equation}
    where $\mathfrak Q$ is a nonlinear, non-local flux, or collision-flux, of the energy density, given by
{ \begin{equation}\label{eqn::energy_density_flux}
    \begin{aligned}
        \mathfrak Q[g](t,k) = 2\Big[\int^\infty_0\int^{k+k_1}_k g_1 g_2(k_1-k_2)^2\mathrm d k_{2,1} - \int^k_0\int^{k_1}_{k-k_1}g_1g_2(k_1+k_2)^2\mathrm d k_{2,1} \\
        -4\int^k_0\int_k^\infty g_1 g_2 k_1 k_2 \mathrm dk_{2,1} - 4\int^k_0\int^{k_1}_0 g_1g_2k_1k_2\mathrm d k_{2,1}\\
        - 2\int^k_0 g^2_1 k_1^2 \mathrm dk_1 - 2\int_{\frac{k}{2}}^k g_1^2 k_1^2 \mathrm d k_1 \Big].
    \end{aligned}
    \end{equation}
}\end{proposition}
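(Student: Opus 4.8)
The plan is to establish the equivalence at the level of weak (distributional) formulations and then invoke the fundamental lemma of the calculus of variations. Concretely, I would show that for every radial test function $\phi$ the collision term, paired against $\phi$ in the natural radial sense, equals the pairing of $\partial_k\mathfrak Q[g]$ against $\phi$; since the two sides of \eqref{eqn::energy_density_Cauchy} agree pointwise in $p$, this would yield the conservation law. The collision side is supplied directly by Proposition \ref{Lemma:WeakFormulation}: using the governing equation, $\int\mathcal Q[g/|p|]\,|p|\,\phi\,\mathrm d^3p$ is rewritten as the two-fold integral \eqref{WeakFormulation:radialL} over $\{|p_1|>|p_2|\ge0\}$ together with its diagonal counterpart. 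The problem thus reduces to a purely algebraic identity between this two-fold integral — in which $\phi$ appears evaluated at the interaction frequencies $|p_1|+|p_2|$, $|p_1|$, $|p_2|$, $|p_1|-|p_2|$ — and an expression in which $\phi$ is integrated against the single flux $\mathfrak Q[g]$ after one integration by parts in $k$.

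The structural key, and the reason a flux form exists at all, is that in each bracket of \eqref{WeakFormulation:radialL} the coefficients of the point-values of $\phi$ sum to zero. For the off-diagonal bracket this is the cubic identity
\[
(a+b)^3-2a(a^2+b^2)-4ab^2+(a-b)^3=0,\qquad a=|p_1|,\ b=|p_2|,
\]
and the diagonal bracket exhibits the analogous cancellation at $a=b=s$, collapsing to a multiple of $g(s)^2\,[\phi(2s)-\phi(s)]$. This zero-sum is exactly the conservation of $\int g$, and it is what permits each bracket to be written as a telescoping combination $\sum_i c_i[\phi(x_i)-\phi(x_{\min})]=\int_0^\infty\phi'(k)\big(\sum_{i:\,x_i>k}c_i\big)\mathrm dk$, i.e.\ as a single integral against $\phi'$ with a piecewise-constant kernel supported between the smallest and largest interaction frequencies. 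The diagonal bracket is treated identically via $\phi(2s)-\phi(s)=\int_s^{2s}\phi'(k)\,\mathrm dk$, whose support condition $s<k<2s$ is precisely $k/2<s<k$, producing the range $\int_{k/2}^{k}$ of the fifth and sixth terms of \eqref{eqn::energy_density_flux}.

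With the brackets rewritten in this way, I would apply Fubini to exchange the $k$-integration coming from the telescoping with the $(|p_1|,|p_2|)$-integration. For each fixed $k$ the kernel is nonzero only on an explicit region of the $(|p_1|,|p_2|)$-plane, which must be decomposed according to the ordering of $|p_1|+|p_2|$, $|p_1|$, $|p_2|$, $|p_1|-|p_2|$ relative to $k$ — in particular splitting on $|p_1|\lessgtr 2|p_2|$ and clamping $|p_1|-|p_2|\ge 0$. After relabeling $(|p_1|,|p_2|)\to(k_1,k_2)$ and carrying out the changes of variables that send each interaction frequency to the flux variable, the surviving pieces assemble into the four off-diagonal integrals of \eqref{eqn::energy_density_flux} with their stated limits $\int_0^\infty\!\int_k^{k+k_1}$, $\int_0^k\!\int_{k-k_1}^{k_1}$, $\int_0^k\!\int_k^\infty$ and $\int_0^k\!\int_0^{k_1}$, while the diagonal contribution yields the two $g_1^2$ terms. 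Since the resulting identity holds for all admissible $\phi$, the strong conservation law $\partial_t g-\partial_k\mathfrak Q[g]=0$ follows, with the initial data inherited directly from \eqref{eqn::energy_density_Cauchy}.

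The main obstacle is this last stage: the region decomposition and the term-by-term changes of variables in the Fubini step, where every integration limit of \eqref{eqn::energy_density_flux} must be matched exactly and the radial-measure weights reconciled so that the quadratic kernels $(k_1-k_2)^2$, $(k_1+k_2)^2$ and $k_1k_2$ emerge with the correct signs and the diagonal $g_1^2$ terms are isolated correctly. Everything preceding it is structural; essentially all of the genuine labor, and all of the opportunities for sign or limit errors, live in this bookkeeping. As consistency checks I would verify the zero-sum identities above, confirm that the boundary terms dropped in the integration by parts vanish for compactly supported $\phi$, and examine the behavior of $\mathfrak Q$ as $k\to 0$ and $k\to\infty$ against the expected energy-flux interpretation.
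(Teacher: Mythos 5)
Your overall pipeline --- the weak form supplied by Proposition \ref{Lemma:WeakFormulation}, rewriting point-values of the test function as integrals of its derivative, Fubini, region decomposition, then the fundamental lemma --- is the exact dual of what the paper does (the paper instead inserts the one-parameter family $\phi=\chi_{[0,c]}$ and differentiates in $c$, which produces the same piecewise-constant kernels). But there is a genuine flaw in the mechanism you identify as ``the reason a flux form exists at all.'' The cubic zero-sum identity holds for the bracket of \eqref{WeakFormulation:radialL} in the weighting of the test function appearing there (the paper's $\varphi$). If you telescope in \emph{that} weighting, the left-hand pairing is $\partial_t\int_0^\infty k^2 g\,\varphi\,\mathrm d k$, so your argument delivers a flux law for $k^2 g(t,k)$ --- a perfectly valid but \emph{different} conservation law, with a different flux. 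The flux \eqref{eqn::energy_density_flux} is the one for $\tilde g = kg$ tested against $\phi(k)=k\varphi(k)$ (the paper drops the tilde at the end), and in those variables the bracket coefficients do \emph{not} sum to zero: off the diagonal,
\begin{equation*}
(k_1+k_2)^2-2(k_1^2+k_2^2)-4k_1k_2+(k_1-k_2)^2=-4k_1k_2,
\end{equation*}
and on the diagonal the sum is $-(k_1+k_2)^2=-4k_1^2$. The flux form still exists there, but not via telescoping: for compactly supported $\phi$ one uses $\phi(x)=-\int_x^\infty\phi'(k)\,\mathrm dk$, and it is precisely this nonzero residual that generates terms your scheme cannot produce.

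Here is the falsifiable consequence. In a pure telescoping representation, a bracket contributes to the kernel at $k$ only when its smallest and largest interaction frequencies straddle $k$. For the diagonal bracket ($x_{\min}=k_1$, $x_{\max}=2k_1$) this forces $k/2<k_1<k$, so you can only ever obtain the sixth term $\int_{k/2}^k g_1^2k_1^2\,\mathrm dk_1$ of \eqref{eqn::energy_density_flux} --- never the fifth term $-2\int_0^k g_1^2k_1^2\,\mathrm dk_1$, whose range includes $(0,k/2)$; your own sentence assigning the range $\int_{k/2}^k$ to \emph{both} the fifth and sixth terms is the symptom of this. Likewise the fourth term $-4\int_0^k\int_0^{k_1}g_1g_2k_1k_2\,\mathrm dk_{2,1}$ includes the region $k_1+k_2\le k$, where \emph{all} interaction frequencies lie below $k$ and a telescoped bracket contributes nothing. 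The paper's case analysis absorbs these automatically: in \eqref{K_1}--\eqref{K_2}, the ``all indicators on'' cases $k_1+k_2\le c$ and $k_1=k_2\le c/2$ give the nonvanishing residuals $\mathcal K_1=-4k_1k_2$ and $\mathcal K_2=-(k_1+k_2)^2$, which become exactly terms IV and VI of \eqref{Q1_reduced}--\eqref{Q2_reduced}. The repair is to run your program in the $(\tilde g,\phi)$ weighting, drop the zero-sum as the enabling mechanism, and keep the residual contributions; beyond that, note that the bookkeeping you defer (the case decomposition and the recombination of terms I--VII) is where essentially the entire paper proof lives, so as written the proposal is a plan whose central step, executed faithfully, would land on the wrong flux.
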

\begin{proof}
Following  \cite{soffer2019energy}, we apply a test function $\varphi(p)$ to equation \eqref{eqn::energy_density_Cauchy} and integrate to obtain
\begin{equation}
    \partial_t \int_{\mathbb R^3} g(t,p) \varphi(p) \mathrm d^3 p = \int_{\mathbb R^3} |p| \mathcal Q\Big[ \frac{g}{|p|}\Big](t,p) \varphi(p) \mathrm d^3 p.
\end{equation}
From \cite{soffer2019energy}, we should have $\varphi \in \mathfrak M$, where $\mathfrak M$ is the function space spanned by
\begin{equation}\label{eqn::space_M}
    \{ \varphi(p) \text{  }  | \text{  } p\varphi(p) \in C_c([0,\infty) )\},
\end{equation}
{where $C_c([0,\infty)) $ denotes the space of compactly supported continuous functions in $[0,\infty)$ } and the space $C([0,\infty])$ for which $\lim\limits_{{p\to \infty}}\varphi(p)$ exists.  Full details can be found in \cite{soffer2019energy}.

Assuming both $g(t,p)$ and $\varphi(p)$ are radial, we set $ k = |p|$ and we can identity $g(t,p)$ with $g(t,|p|)
$
and the same for $\varphi(k)$. We  define $\tilde g(t,k) = k g(t,k)$ and $\phi(k) = k\varphi(k)$. { From Proposition \ref{Lemma:WeakFormulation}, we have}
{
\begin{equation}
\begin{aligned}
        \int_{\mathbb R^3} |p| \mathcal Q\Big[ \frac{g}{|p|}\Big](t,p) \varphi(p) \mathrm d^3 p = 2\int_{|p_1|>|p_2|\geq 0}|p_1||p_2|g(|p_1|)g(|p_2|) \\
        \times \Big[||p_1|+|p_2||^3\varphi(|p_1|+|p_2|)-2(|p_1|^2 +| p_2|^2)|p_1|\varphi(|p_1|)\\ - 4p_1p_2^2\varphi(|p_2|) + (|p_1|-|p_2|)^3\varphi(|p_1|-|p_2|) \Big]\mathrm d |p_1|\mathrm d |p_2| \\
        + \int_{|p_1|=|p_2|\geq 0} |p_1||p_2|g(|p_1|)g(|p_2|)(|p_1|+|p_2|)^2\\
        \times\Big[ (|p_1|+|p_2|)\varphi(|p_1|+|p_2|)- |p_1|\varphi(|p_1|) \\
        - |p_2|\varphi(|p_2|) \Big]\mathrm d |p_1| \mathrm d |p_2|,
\end{aligned}
\end{equation}}
whence, using $\tilde g$ and $\phi$,
\begin{equation}
    \begin{aligned}
       = 2 \int_{k_1 > k_2 \geq 0} \tilde g(k_1) \tilde g(k_2)\Big[(k_1+k_2)^2\phi(k_1+k_2)-2(k_1^2+k_2^2)\phi(k_1) 
      \\ -4 k_1k_2\phi(k_2) + (k_1 - k_2)^2\phi(k_1-k_2)\Big] \mathrm d k_1 \mathrm d k_2\\
       + \int_{k_1 = k_2 \geq 0} \tilde g(k_1) \tilde g(k_2)(k_1 + k_2)^2 \Big[ \phi(k_1 + k_2) - \phi(k_1) - \phi(k_2)\Big]\mathrm d k_1 \mathrm d k_2,
    \end{aligned}
\end{equation}
which results in the following weak form of the $3$-WKE
\begin{equation}\label{3wke_weak}
    \begin{aligned}
        \partial_t \int_{\mathbb R^+} \tilde g(t,k)\phi(k)\mathrm d k = & & 
        \\ 2 \int_{k_1 > k_2 \geq 0} \tilde g(k_1) \tilde g(k_2)\Big[(k_1+k_2)^2\phi(k_1+k_2) 
        -2(k_1^2+k_2^2)\phi(k_1) 
       \\
       -4 k_1k_2\phi(k_2) + (k_1 - k_2)^2\phi(k_1-k_2)\Big] \mathrm d k_1 \mathrm d k_2
       \\ + \int_{k_1 = k_2 \geq 0} \tilde g(k_1) \tilde g(k_2)(k_1 + k_2)^2 \Big[ \phi(k_1 + k_2) - \phi(k_1) - \phi(k_2)\Big]\mathrm d k_1 \mathrm d k_2.
    \end{aligned}
\end{equation}
Following \cite{waltontranFVS}, we set $\phi(k) = \chi_{[0,c]} (k)$ which implies $\varphi(k) \in\mathfrak M$ as desired. 
{The left-hand side of \eqref{3wke_weak} becomes becomes $$ \partial_t \int_{\mathbb R^+}\tilde{g}(t,k)\chi_{[0,c]}(k)\mathrm{d}k = \partial_t \int^c_0 \tilde{g}(t,k)\mathrm{d}k, $$ which taking the derivative with respect to $c$ and using the definition of $\tilde{g}$ results in $$ \partial_c \int^c_0 \partial_t \tilde{g}(t,k)\mathrm{d} k = \partial_t \tilde{g}(t,c).$$ 
Now applying the derivative with respect to $c$ on the right-hand side,} we obtain 
\begin{equation}
    \begin{aligned}
        \partial_t \tilde g(t,c) =  
        \\ 2 \partial_c\Bigg[  \int_{k_1 > k_2 \geq 0} \tilde g(k_1) \tilde g(k_2) \Big[ |k_1 + k_2|^2 \chi_{[0,c]}(k_1+k_2) - 2 (k_1^2 + k_2^2)\chi_{[0,c]}(k_1)
        \\ - 4k_1k_2\chi_{[0,c]}(k_2) + (k_1 - k_2)^2\chi_{[0,c]}(k_1-k_2)\Big]\mathrm d k_1 \mathrm d k_2 \Bigg] 
        \\ + \partial_c\Bigg[  \int_{k_1 = k_2 \geq 0} \tilde g(k_1) \tilde g(k_2)(k_1 + k_2)^2 \Big[ \chi_{[0,c]}(k_1 + k_2)  -\chi_{[0,c]}(k_1) -\chi_{[0,c]}(k_2) \Big]\mathrm d k_1 \mathrm d k_2 \Bigg]
        \\
        =  2 \partial_c\int_{k_1 > k_2 \geq 0} \tilde g(k_1) \tilde g(k_2) \mathcal K_1 \mathrm d k_1 \mathrm d k_2 
       + \partial_c\int_{k_1 = k_2 \geq 0} \tilde g(k_1) \tilde g(k_2)\mathcal K_2 \mathrm d k_1 \mathrm d k_2 \\
       = \mathbb Q_1[g](t,c) + \mathbb Q_2[g](t,c),
    \end{aligned}
\end{equation}
where we have set
\begin{equation}\label{K_1}
    \mathcal K_1 = |k_1 + k_2|^2 \chi_{[0,c]}(k_1+k_2) - 2 (k_1^2 + k_2^2)\chi_{[0,c]}(k_1)
\end{equation}
$$ - 4k_1k_2\chi_{[0,c]}(k_2) + (k_1 - k_2)^2\chi_{[0,c]}(k_1-k_2),$$
and 
\begin{equation}\label{K_2}
    \mathcal K_2 = (k_1 + k_2)^2 \Big( \chi_{[0,c]}(k_1 + k_2)  -\chi_{[0,c]}(k_1) -\chi_{[0,c]}(k_2) \Big).
\end{equation}
{The effect of the test functions $\chi_{[0,c]}$ in the integrals above is to define the domain of integration in each term, as we will see shortly.  The above derivatives with respect to $c$ as written are defined in the weak sense.  We will see that once we have computed the terms $\mathcal{K}_1$, $\mathcal{K}_2$ and defined the bounds of integration, the derivatives are also well-defined in the classical sense and could be computed via an application of the Leibniz rule. }
As in \cite{waltontranFVS}, we can evaluate $\mathcal K_1 $ and $\mathcal K_2$ by considering sections of the $k_1-k_2$ domain which conform to the constraints imposed by the definitions of $\mathbb Q_1$ and $\mathbb Q_2$.  To this end, we start with $\mathcal K_1$. In each case we are constrained by the inequality $0 \leq k_2 < k_1$.
\begin{itemize}
    \item The case $k_1 + k_2 \leq c$ gives $\mathcal K_1 = -4k_1k_2$
    \item Next, consider $k_1 + k_2 > c$, then we have $5$ possibilities:
    \begin{itemize}
        \item First consider $k_1, k_2, k_1-k_2 \leq c$, then $\mathcal K_1 = -(k_1^2 + k_2^2 + 6k_1k_2)$
        \item Next, $k_1 > c$ with $k_2, k_1 - k_2 \leq c$ gives $\mathcal K_1 = k^2_1 + k^2_2 -6k_1k_2$.
        \item If $k_1, k_2 > c$ but $k_1 - k_2 \leq c$, we have $\mathcal K_1 = (k_1 - k_2)^2$.
        \item The case when $k_1, k_2, k_1-k_2 > c$ results in $\mathcal K_1 = 0$.
        \item Lastly, we can have $k_1, k_1 - k_2 > c$ and $k_2 \leq c$ which gives $K_1 = -4k_1k_2$.  
    \end{itemize}
\end{itemize}
Next, we analyze $\mathcal K_2$ which is constrained by $k_1= k_2$. 
\begin{itemize}
    \item First, $k_1 + k_2 \leq c$ or $k_1 \leq \frac{c}{2}$ which gives $\mathcal K_2 = -(k_1+k_2)^2$.
    \item If $k_1 + k_2 >c$ we have two cases
    \begin{itemize}
        \item The case $k_1 = k_2 > c$ which results in $\mathcal K_2 = 0.$
        \item Lastly, we can have $k_1 = k_2 \leq c$ giving $\mathcal K_2 = -2(k_1 + k_2)^2$.
    \end{itemize}
\end{itemize}
{Applying these results to $\mathbb Q_1$ and $\mathbb Q_2$ results in 
% \begin{equation}\label{Q1_reduced}
%     \begin{aligned}
%         \mathbb Q_1 = 2\partial_c\Bigg(\int^c_0\int_{c}^{c+k_1}\tilde g_1 \tilde g_2(k_1^2 + k_2^2 - 6k_1k_2)\mathrm dk_{2,1} + \int_c^\infty\int_c^{c+k_1}\tilde g_1 \tilde g_2(k_1 - k_2)^2 \mathrm dk_{2,1}
%         \\ 
%         - \int^c_0 \int^{k_1}_{c-k_1} \tilde g_1 \tilde g_2 (k_1^2 + k_2^2 + 6k_1k_2)\mathrm dk_{2,1}
%         \\
%         -4\int^c_0\int^{c-k_1}_0\tilde g_1 \tilde g_2 k_1 k_2 \mathrm dk_{2,1} - 4\int_{0}^c\int_{c+k_1}^\infty \tilde g_1 \tilde g_2 k_1 k_2\mathrm dk_{2,1} \Bigg),
%     \end{aligned}
% \end{equation}
\begin{equation}\label{Q1_reduced}
    \begin{aligned}
        \mathbb{Q}_1 = 2\partial_c\Biggl(
        \underbrace{\int_{0}^{c}\int_{c}^{c+k_1}\tilde g_1 \tilde g_2\,(k_1^2+k_2^2-6k_1k_2)\,dk_{2,1}}_{\mathrm{(I)}}
        +\underbrace{\int_{c}^{\infty}\int_{c}^{c+k_1}\tilde g_1 \tilde g_2\,(k_1-k_2)^2\,dk_{2,1}}_{\mathrm{(II)}}\\
        -\underbrace{\int_{0}^{c}\int_{c-k_1}^{\,k_1}\tilde g_1 \tilde g_2\,(k_1^2+k_2^2+6k_1k_2)\,dk_{2,1}}_{\mathrm{(III)}}\\ 
        -4\,\underbrace{\int_{0}^{c}\int_{0}^{\,c-k_1}\tilde g_1 \tilde g_2\,k_1k_2\,dk_{2,1}}_{\mathrm{(IV)}}
        -4\,\underbrace{\int_{0}^{c}\int_{c+k_1}^{\infty}\tilde g_1 \tilde g_2\,k_1k_2\,dk_{2,1}}_{\mathrm{(V)}}
        \Biggr).
    \end{aligned}
\end{equation}
and
\begin{equation}\label{Q2_reduced}
    \begin{aligned}
        \mathbb Q_2 = -4\partial_c\Bigg(\underbrace{\int^{\frac{c}{2}}_0\tilde g_1^2 k_1^2 \mathrm d k_1}_{\mathrm{VI}} + 2\underbrace{\int^{c}_{\frac{c}{2}} \tilde g_1^2 k_1^2 \mathrm d k_1}_{\mathrm{VII}}\Bigg).
    \end{aligned}
\end{equation}
We used the notation $\tilde g_i = g(k_i)$ for $i=1,2$ and $\mathrm d k_{2,1} = \mathrm d k_2 \mathrm d k_1$ above and throughout the rest of the paper.  
Further simplifications can be made to reduce the number of terms.  First consider the term I, which we can write as
\begin{equation}\label{eqn:term1}
    \int_{0}^{c}\int_{c}^{c+k_1}\tilde g_1 \tilde g_2\,(k_1^2+k_2^2-6k_1k_2)\,dk_{2,1} = \int_{0}^{c}\int_{c}^{c+k_1}\tilde g_1 \tilde g_2\,(k_1-k_2)^2\,dk_{2,1}-4\int_{0}^{c}\int_{c}^{c+k_1}\tilde g_1 \tilde g_2\,k_1k_2\,dk_{2,1}.
\end{equation}
The first term of \eqref{eqn:term1} above can be combined with II of \eqref{Q1_reduced} to obtain
\begin{equation}
    \begin{aligned}
        \mathrm{I} + \mathrm{II} = \int_{0}^{c}\int_{c}^{c+k_1}\tilde g_1 \tilde g_2\,(k_1-k_2)^2\,dk_{2,1}\\-4\int_{0}^{c}\int_{c}^{c+k_1}\tilde g_1 \tilde g_2\,k_1k_2\,dk_{2,1} + \int_{c}^{\infty}\int_{c}^{c+k_1}\tilde g_1 \tilde g_2\,(k_1-k_2)^2\,dk_{2,1} \\
        = \int_0^\infty \int_c^{c+k_1}\tilde{g}_1\tilde{g}_2(k_1-k_2)^2\mathrm{d}k_{2,1} - 4\int_0^c\int_c^{c+k_1}\tilde{g}_1\tilde{g}_2k_1k_2\mathrm{d}k_{2,1},
    \end{aligned}
\end{equation}
further, combining the last term above with V gives
\begin{equation}
    \begin{aligned}
        \mathrm{I} + \mathrm{II} + \mathrm{V} = \int_0^\infty \int_c^{c+k_1}\tilde{g}_1\tilde{g}_2(k_1-k_2)^2\mathrm{d}k_{2,1} - 4\int_0^c\int_c^{\infty}\tilde{g}_1\tilde{g}_2k_1k_2\mathrm{d}k_{2,1}.
    \end{aligned}
\end{equation}
Similarly, term III in \eqref{Q1_reduced} can be written
\begin{equation}
    \begin{aligned}
        \mathrm{III} = -\int_{0}^{c}\int_{c-k_1}^{\,k_1}\tilde g_1 \tilde g_2\,(k_1+k_2)^2\,dk_{2,1}-4\int_{0}^{c}\int_{c-k_1}^{\,k_1}\tilde g_1 \tilde g_2\,k_1k_2\,dk_{2,1},
    \end{aligned}
\end{equation}
which combining with IV in \eqref{Q1_reduced} gives
\begin{equation}
    \begin{aligned}
        \mathrm{III} + \mathrm{IV} = -\int_{0}^{c}\int_{c-k_1}^{\,k_1}\tilde g_1 \tilde g_2\,(k_1+k_2)^2\,dk_{2,1}-4\int_{0}^{c}\int_{0}^{\,k_1}\tilde g_1 \tilde g_2\,k_1k_2\,dk_{2,1},
    \end{aligned}
\end{equation}
so that, applying these changes and rearranging terms in $\mathbb Q_1$, we get 
\begin{equation}
    \begin{aligned}
        \mathbb Q_1 = 2\partial_c\Bigg(\mathrm{I} + \mathrm{II} + \mathrm{V} + \mathrm{III} + \mathrm{IV} \Bigg)&\ \\
        = 2\partial_c\Bigg(\int_0^\infty \int_c^{c+k_1}\tilde{g}_1\tilde{g}_2(k_1-k_2)^2\mathrm{d}k_{2,1} - 4\int_0^c\int_c^{\infty}\tilde{g}_1\tilde{g}_2k_1k_2\mathrm{d}k_{2,1} \\
        -\int_{0}^{c}\int_{c-k_1}^{\,k_1}\tilde g_1 \tilde g_2\,(k_1+k_2)^2\,dk_{2,1}-4\int_{0}^{c}\int_{0}^{\,k_1}\tilde g_1 \tilde g_2\,k_1k_2\,dk_{2,1}\Bigg).
    \end{aligned}
\end{equation}
In $\mathbb Q_2 $, we can rearrange terms VI and VII slightly
\begin{equation}
    \begin{aligned}
        \mathrm{VI} + \mathrm{VII} = -4\partial_c\Bigg(\int_0^{\frac{c}{2}}\tilde{g}_1^2k_1^2 + \int_{\frac{c}{2}}^c\tilde{g}_1^2k_1^2 + \int_{\frac{c}{2}}^c\tilde{g}_1^2k_1^2\Bigg)\\
        = -4\partial_c\Bigg(\int_0^{c}\tilde{g}_1^2k_1^2 + \int_{\frac{c}{2}}^c\tilde{g}_1^2k_1^2\Bigg).
    \end{aligned}
\end{equation}
Dropping the tilde notation in what follows throughout, we arrive at
\begin{equation}\label{3wke_final}
    \begin{aligned}
        \partial_t g(t,c) = 2\partial_c\Bigg(\mathrm{I} + \mathrm{II} + \mathrm{V} + \mathrm{III} + \mathrm{IV} + \frac12\mathrm{VI} + \frac12\mathrm{VII}\Bigg) \\ 2\partial_c\Bigg(\int^\infty_0\int^{c+k_1}_c g_1 g_2(k_1-k_2)^2\mathrm d k_{2,1} 
        -4\int^c_0\int_c^\infty g_1 g_2 k_1 k_2 \mathrm dk_{2,1}\\ - \int^c_0\int^{k_1}_{c-k_1}g_1g_2(k_1+k_2)^2\mathrm d k_{2,1} - 4\int^c_0\int^{k_1}_0 g_1g_2k_1k_2\mathrm d k_{2,1}\\
        - 2\int^c_0 g^2_1 k_1^2 \mathrm dk_1 - 2\int_{\frac{c}{2}}^c g_1^2 k_1^2 \mathrm d k_1 \Bigg)\\
        = \partial_c \mathfrak Q[g](t,c),
    \end{aligned}
\end{equation}
which, after changing variables from $c$ to $k$, is the conservative form we wished to obtain.}
\end{proof}
  
This is the identity from which we will derive our numerical method in the next section.  We remark that the above form \eqref{3wke_final} will only require one to truncate two integrals in the numerical implementation, specifically the semi-infinite integrals in the first and third terms on the right-hand side above.

\subsection*{Remark}

{Choosing the test function $\chi_{[0,c]}$ in the above conservative form is an important step in designing our numerical scheme. In the  previous work \cite{waltontranFVS},  we only consider the forward cascade part of the collision operator. By using the test function  $\chi_{[0,c]}$, we reduce significantly the number of convolution integrals appearing in the forward cascade part of 
 collision operator. Since the collision operator involves convolution integrals from $0$ to $\infty$, truncations are needed. However, by reducing the number of convolutions integrals, the number of truncations needed for the case considered in \cite{waltontranFVS} is only $1$. In the current manuscript, when the backward cascade part is added to the collision operator, the number of truncations needed is only $2$. Since  there is no cancellation between the backward cascade and forward cascade parts of the collision operator, reducing the number of truncations to $2$ is an important step to enhance the accuracy of the numerical schemes.}  

In our numerical tests, we are interested in measuring the total energy decay, which is easy to obtain from $\tilde g(t,k) = g(t,k) k$. However, one can easily modify the computations above to obtain an energy flux in terms of $g(t,k)$ instead of $\tilde g(t,k)$.  In fact, one can keep our choice to use $\phi = k\varphi(k)$ above and leave $g(t,k)k $ in place of $\tilde g(t,k)$. The computations for $\mathcal K_1$ and $\mathcal K_2$ are identical, and thus the resulting domains of integration remain as above.

In the next section, we give a full account of the implicit finite volume discretization used to solve \eqref{eqn::energy_density_Cauchy}.

\section{Discretized Equations}\label{sec::discretization}
We now describe our discretization of the 3-wave kinetic equation 
\begin{equation}\label{final_cauchy}
    \begin{aligned}
            \partial_t g(t,k) -\partial_k \mathfrak Q[g](t,k) = 0 && (t,k) \in \mathbb R^+\times \mathbb R^+, \\
            g(0,k) = g_0(k) && (t,k) \in \{0\}\times\mathbb R^+.
    \end{aligned}
\end{equation}
   The first step is to truncate the computational domain so that $(t,k) \in I\times\Omega \subset \mathbb R^+\times\mathbb R^+ $, where $I = (0,T]$, $T>0$ and $\Omega = [0,L]$ with $L>0$ which will be referred to as the truncation parameter.  The time domain is partitioned into $N$ intervals, so that $t^{n+1} = t^{n} + \Delta t_n$ with $n\in\{0, 1, 2, \ldots, N\}$ as is standard.  The wavenumber domain is partitioned into a collection of cells, $K_i$, such that $K_i\cap K_j =\emptyset$ for $i\neq j$ and $\bigcup\limits_{K_i\in\mathcal K }K_i = \Omega$, with the collection of cell $\mathcal K=\{ K_i \}_{i=0}^M$.  The $K_i$ are defined to be $K_i = [k_{i-1/2}, k_{i+1/2} )$ and $\Delta k_i = k_{i+1/2} - k_{i-1/2}$ is the stepsize in the wavenumber domain with midpoints $k_i = \frac{k_{i+1/2}+k_{i-{1/2}}}{2}$ as usual.  

Let $g^n_i = g(t^n, k_i)$ be the grid function evaluated at the midpoint $k_i$ at time $t^n$ and $g^0_i = \frac{1}{\Delta k_i}\int_{K_i} g(t^0, k) \mathrm d k$ is the finite volume approximation of the initial condition.  

The discrete collision operator $\mathfrak Q_{i + 1/2}$ is defined by
\begin{equation}\label{discrete_collision_op}
    \begin{aligned}
        \mathfrak Q_{i + 1/2}g^n = 2\Bigg(\sum_{j=0}^M \Delta k_j g^n_j \sum_{\ell = i}^{\beta} \Delta k_\ell g^n_\ell (k_j - k_\ell)^2 - \sum_{j=0}^{i}\Delta k_j g^n_j \sum_{\ell = \sigma}^j\Delta k_\ell g^n_\ell (k_j + k_\ell)^2
        \\ 
        - 4\sum_{j=0}^{i}\Delta k_jg^n_j\sum_{\ell = i}^M\Delta k_\ell g^n_\ell k_j k_\ell
        - 4\sum_{j=0}^{i}\Delta k_j g^n_j \sum_{\ell = 0}^j\Delta k_\ell  g^n_\ell k_j k_\ell
        \\
        -2 \sum_{j=0}^{i}\Delta k_j(g^n_j)^2 k_j^2 - 2\sum_{j=\zeta}^{i}\Delta k_j (g^n_j)^2 k_j^2 \Bigg), 
    \end{aligned}
\end{equation}
with $\beta = i+j$, $\sigma = i-j$ and $\zeta = \lfloor \frac{i}{2} \rfloor$ above. 

Then, the approximate collision-flux at $t^n$ and $c_i$ is 
\begin{equation}\label{eqn:collision_flux}
    \partial_k \mathfrak Q[g](t^n,k_i) \approx \frac{1}{\Delta k_i}\Big( \mathfrak Q_{i + 1/2} - \mathfrak Q_{i - 1/2}\Big)g^n.
\end{equation}

 It was seen in \cite{waltontranFVS} that the forward-cascade collision-flux is quite stiff and can lead to very restrictive time steps if one uses an explicit time integration method.  Unfortunately, it would appear that the collision-flux defined by \eqref{eqn:collision_flux} exhibits even stiffer behavior.  For this reason, we will use implicit time integration methods.  

However, the collision operator is expensive to evaluate and coupling this cost with a nonlinear solve to evolve in time can negate any advantages one might hope to obtain from the increased time step size with an implicit integration scheme.  To mitigate this, we will simplify the evaluation of the flux term. {Then, let $$\delta_i\mathfrak Q = \mathfrak Q_{i+1/2} - \mathfrak Q_{i-1/2}, $$ denote the difference operator at $i$.  Using the definitions for $\mathfrak Q_{i+1/2}$ and $\mathfrak Q_{i-1/2}$ and explicitly computing their difference $\mathfrak Q_{i+1/2} - \mathfrak Q_{i-1/2}$, one arrives at a further simplification
\begin{equation}\label{simplified_flux}
    \begin{aligned}
        \delta_i\mathfrak Q g^n = 2\Bigg( \sum^M_{j=0}\Delta k_j g^n_j\Big\{g^n_{i+j}\Delta k_{i+j}(k_j - k_{i+j})^2 - g^n_{i-1}\Delta k_{i-1}(k_j - k_{i-1})^2 \Big\}\\
        + \sum^{i-1}_{j=0} g^n_j\Delta k_j g_{i-j-1}^n\Delta k_{i-j-1}(k_j + k_{i-j-1})^2 - g^n_{i}\Delta k_i \sum^i_{j=0}g^n_{j}\Delta k_j (k_i + k_j)^2 \\
        - 4\Big\{g^n_i\Delta k_i \sum_{j=0}^M g^n_j \Delta k_j k_i k_j - g^n_{i-1}\Delta k_{i-1}\sum^{i-1}_{j=0}g_j^n\Delta k_j k_{i-1}k_j \Big\} \\
        - 4 (g^n_i)^2k_i^2\Delta k_i^2 + 2 (g^n_{\lfloor \frac{i-1}{2}\rfloor})^2k^2_{\lfloor \frac{i-1}{2}\rfloor}\Delta k_{\lfloor \frac{i-1}{2}\rfloor}^2\Bigg).
    \end{aligned}
\end{equation}}

Thus, using the backward-Euler method would cast the fully discrete equation as
\begin{equation}\label{fully_discrete}
    \begin{aligned}
         g^{n+1}_i = g^n_i + \frac{\Delta t}{\Delta k_i}\delta_i\mathfrak Qg^{n+1}.
    \end{aligned}
\end{equation}

The above scheme is unconditionally stable and positivity preserving \cite{Higueras}, but first order in time.  In practice, we use the TR-BDF2 method (see section \ref{sec::num_tests} below) with an adaptive time-stepping procedure.  The adaptive step selection can be further constrained with a condition to guaranteee positivity \cite{diffeqdotjl}.  In the tests we perform, for the time intervals reported, adding such a constraint was not necessary and the solutions maintained positivity.  In an upcoming work we develop high-order implicit schemes tailored to the kinetic wave equation which guarantee positivity of the solutions.

In the next section, various initial distributions for the energy density will be used to test the above scheme.  Our main concern is testing the long-time behavior for which an implicit time integration method is essential. The total energy of the computed solutions is compared with the theoretical bound in equation \eqref{Decomposition0} of the introduction.  In addition to the total energy, the zeroth moment of the energy density, we will also consider higher moments.  The total energy in the interval $[0,L]$ is given by
\begin{equation}
    E_L(t) = \mathcal M^0_L[g](t) = \int_{0}^L g(t,k)k^2\mathrm d k.
\end{equation}
Thus, this quantity, and the higher moments, can be computed from the solution of \eqref{fully_discrete} by
\begin{equation}\label{eqn:moments_definition}
    \mathcal M^r_L[g](t^n) = \sum^M_{j=0}\Delta k_j g^n_j k_j^{r+1},
\end{equation}
recalling that the result of equation \eqref{fully_discrete} is $\tilde g = kg$, but we dropped the tilde notation in the derivation for convenience.

\section{Numerical Results}\label{sec::num_tests}
{In the numerical examples provided, the aim is to test the long-time behavior of the solutions with various initial conditions, both smooth and non-smooth.  Our choice of time integrator coupled with the midpoint rule in the collision integral make the scheme 2nd order accurate in frequency and time.  The code was implemented using MATLAB \cite{MATLAB}.  
    In the tests which follow, we use the Trapezoidal Backward-Differentiation (TR-BDF2) method (see \cite{HOSEA199621, trbdf2} and \cite{EDWARDS20111198} for example) which is a single-step, second order L-stable method. The TR-BDF2 method can be cast as an embedded Diagonally Implicit Runge-Kutta (DIRK) method \cite{hairer_wanner1993} with Butcher table \cite{HOSEA199621} given in Table \ref{tab::TRBDF2},
\begin{table}[h]
    \centering
    \begin{minipage}[t]{0.3\textwidth}
        \renewcommand*{\arraystretch}{1.25}
        \begin{tabular}{c|lll}
            $0$ & $0$ & $0$ & $0$ \\
            $\gamma$ & $d$ & $d$ & $0$ \\
            $1$ & $w$ & $w$ & $d$ \\
            \hline
            \text{ } & $w$ & $w$ & $d$\\
            \hline
            \text{ } & $\frac{1-w}3$ & $\frac{1+3w}3$ & $\frac{d}{3}$
        \end{tabular}
    \end{minipage} 
\caption{Butcher table for TR-BDF2 method cast as an embedded Diagonally Implicit Runge-Kutta (DIRK) method.}
\label{tab::TRBDF2}
\end{table}
where $\gamma  = 2-\sqrt{2}$, $d = \frac{\gamma}{2} $ and $w = \frac{\sqrt{2}}{4}$. The embedded formulation provides a built-in means to estimate the error of solutions and adaptively adjust the time step.  This is precisely the ode23tb integrator in MATLAB.  The adaptive time step allows us to reach quite large values of $T$ quickly. } 

In all tests, we compare the decay rate of the total energy with the theoretical bound $\mathcal O(\frac{1}{\sqrt{t}})$ (see equation \eqref{Decomposition0} in the introduction) and provide a best fit line with computed slope given in the legend for each of these decay estimates.  
  
{To test the total energy decay, the simulations are run to $T = 1e+04$ and the truncation parameter, $L$, may also vary across tests.  In every test, $\Delta k_i = \Delta k = 0.5$ for simplicity.  As the theory suggests the decay bound should hold for { \it any } finite interval. For each test, we provide simulations for two values of the truncation parameter.  The total energy decay for each truncations parameter is compared.}

\subsection*{Test 1}
The first example employs an initial distribution which is compactly supported at lower frequencies and given by 
\begin{equation}\label{eqn::mollifier_ic}
    g(0,k) = \begin{cases}
             \exp\Big(\frac{1}{10(|k-15|^2-1)}\Big) & \text{if }  |k-15| \leq 1\\
            0 & \mathrm{ otherwise } ,
\end{cases}
\end{equation}
as shown in figure \ref{fig::mollifier_ic}. Here, as we are interested in the long-time behavior in any finite interval of the wave-number domain, the truncation parameter is set to {$L=30$ and $L=100$}.  Larger values of the truncation parameter are used in subsequent tests.  

In Figure \ref{fig:total_energy_mollifier_ic}, the total energy for {both values of $L$ is plotted against the theoretical rate. The plot indicates that the total energy in the finite intervals considered} is conserved for a short time $t\in[0, t_{\textrm{Cons}}^1]$, as shown in \cite{soffer2019energy}.  {Note that the energy vs time plots show the steeper
$(t^{-0.764}-t^{-0.729}) $ descent than the right-hand side of \eqref{Decomposition0} predicts $(t^{-1/2})$ since $\mathcal O (t^{-0.764})$, $ \mathcal O   (t^{-0.729})$  $\le \mathcal O  (t^{-1/2})$. Let us remark that in \eqref{Decomposition0}, the authors of \cite{soffer2019energy} only obtained an upper bound on the decay rates since  a lower bound cannot be expected. We will see that this is true for the remaining tests as well.}   For $t>t^1_{\textrm{Cons}}$, the total energy in the interval begins to decay rapidly.  Interestingly, it would appear that the total energy is again conserved for a short time before decaying again at a slower rate that more closely matches the theoretical bound.  Similar behavior is observed for the other initial conditions considered.  We provide a best fit line to the total energy data, which shows that the long-time decay rate for the total energy is indeed bounded by the theoretical rate as the reported slope is less than that of the theoretical rate bound. {In Figure \ref{fig:total_energy_mollifier_ic}, the shelf-type behavior of the energy plot may come from the fact that the solutions blow-up in a self-similar manner. However, this is a guess and we do not have any proof for that.  As proved in \cite{soffer2019energy}, the energy of the solutions will move out of any finite interval $[0,L]$. As a results, it is expected that the moments of any orders are also decaying in time as well.  Figure \ref{fig:moments_mollifier_ic} captures this theoretical predictions for several moments of the solution. }
\begin{figure}
    \centering
    \includegraphics[width=0.75\textwidth]{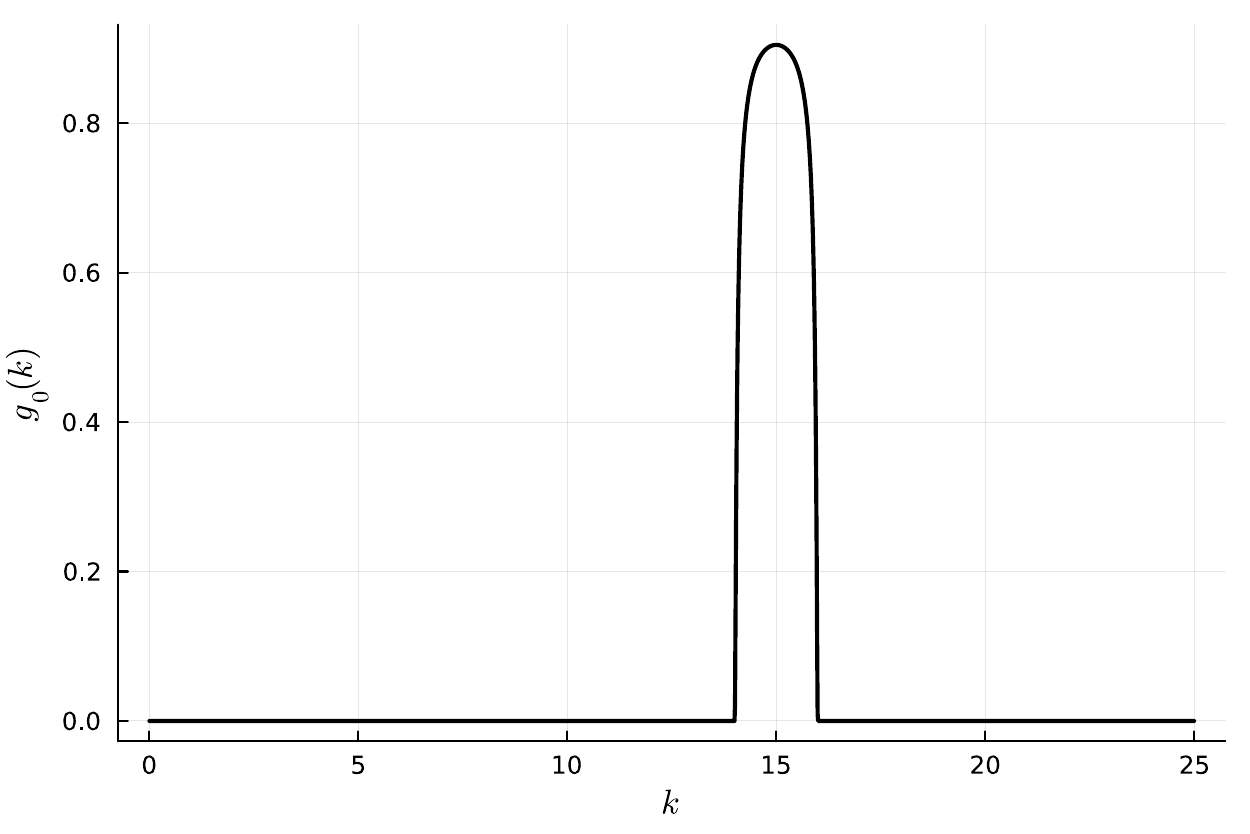}
    \caption{Initial condition corresponding to equation \eqref{eqn::mollifier_ic}. }
    \label{fig::mollifier_ic}
\end{figure}

\begin{figure}
    \centering    \includegraphics[width=0.75\textwidth]{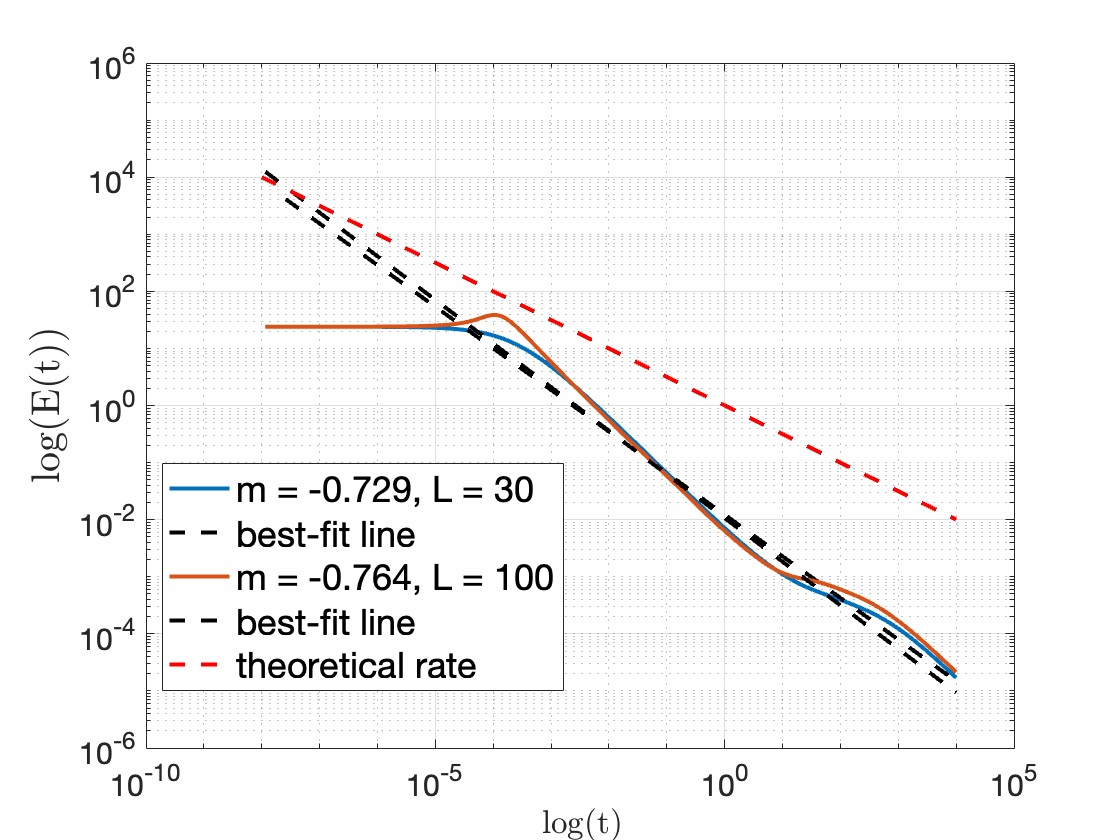}
    \caption{Decay rate of total energy corresponding to \eqref{eqn::mollifier_ic} with theoretical rate, $t^{-\frac12}$, shown for reference.}
    \label{fig:total_energy_mollifier_ic}
\end{figure}

\begin{figure}
    \centering
    \begin{subfigure}{0.49\textwidth}
        \centering
        \includegraphics[width=\textwidth]{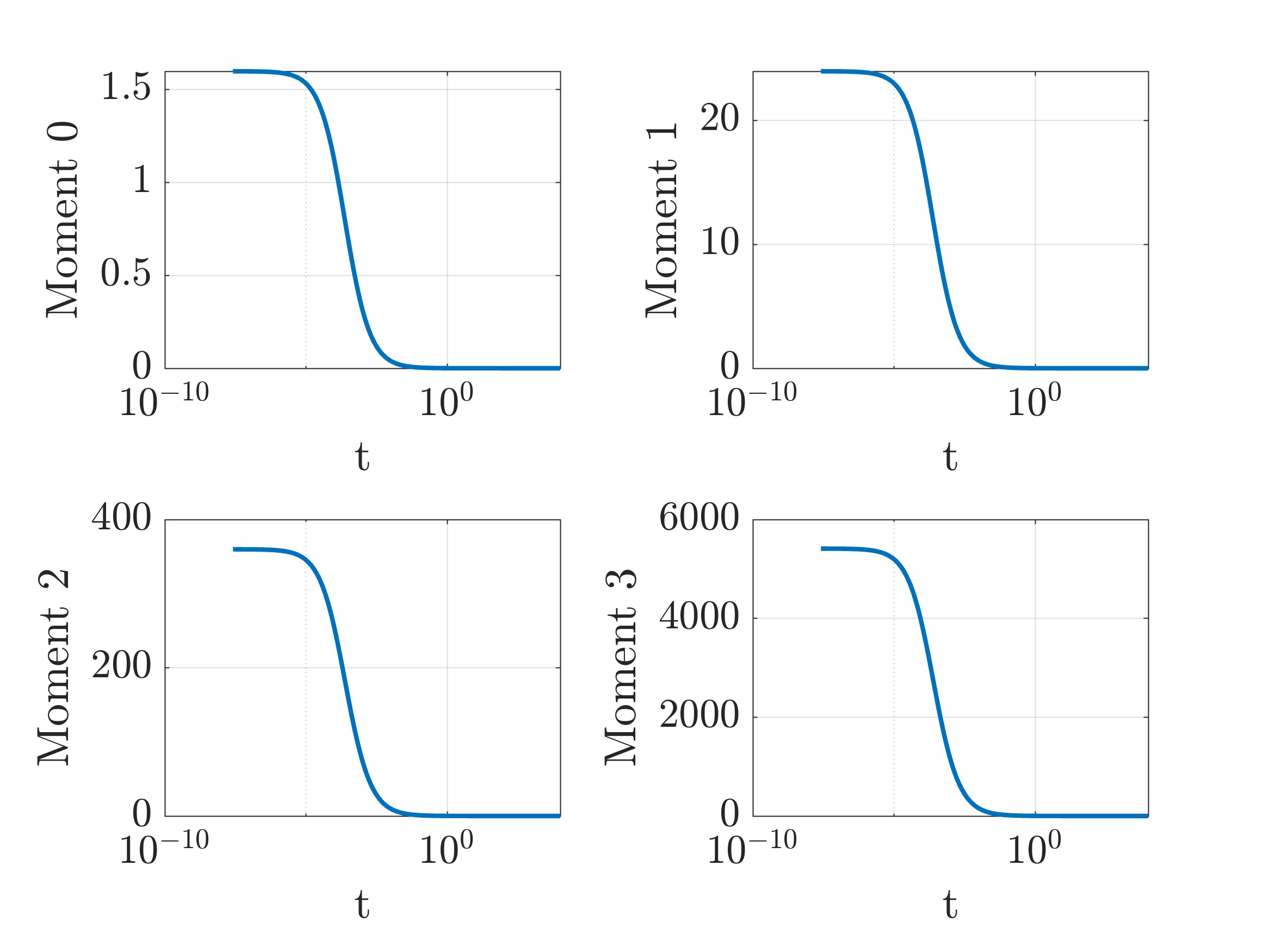}
        \caption{$L = 30$}
    \end{subfigure}
    \hfill
    \begin{subfigure}{0.49\textwidth}
        \centering
        \includegraphics[width=\textwidth]{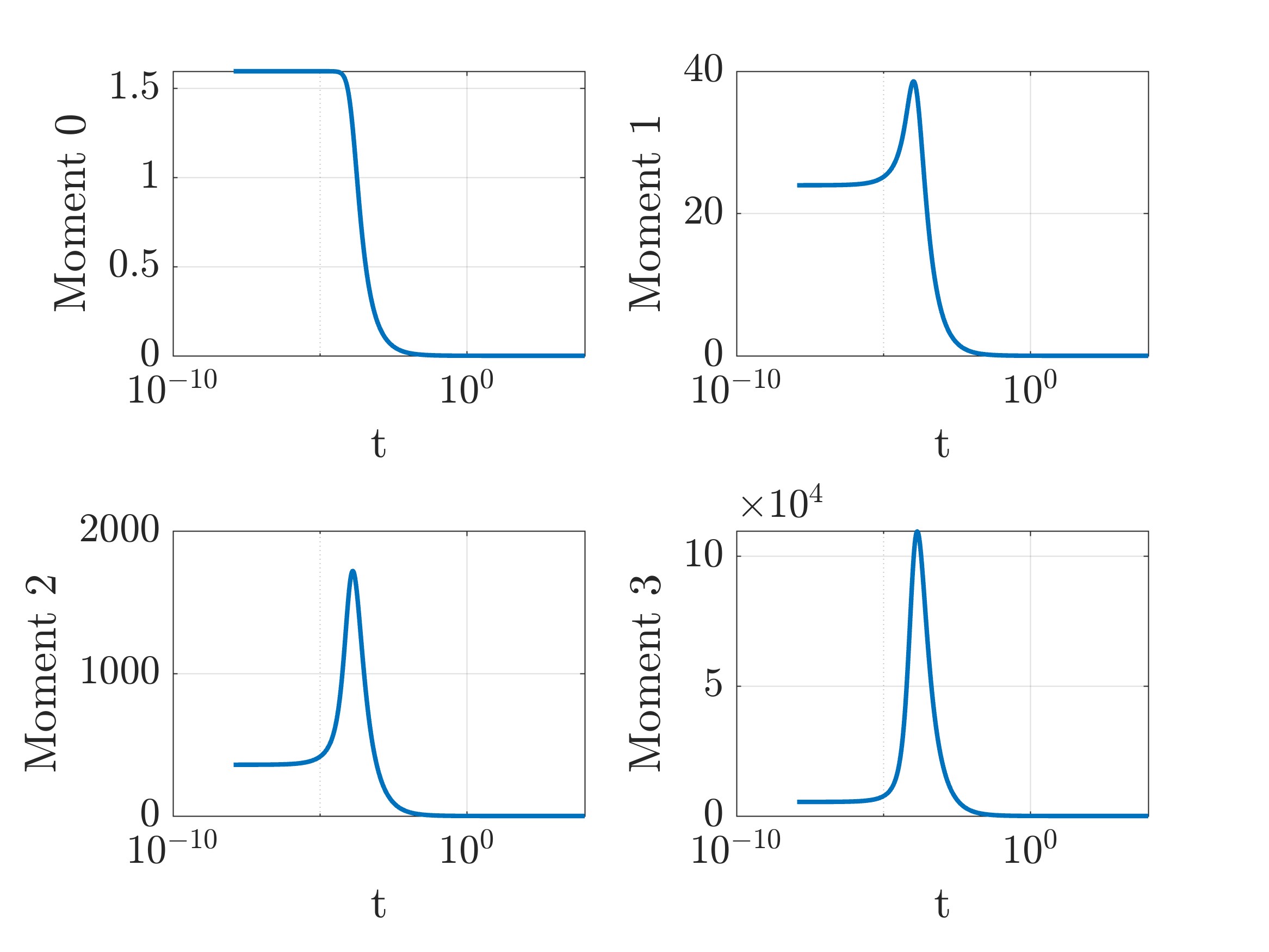}
        \caption{$L = 100$}
    \end{subfigure}
    \caption{Moments of the energy density for initial condition \eqref{eqn::mollifier_ic} with (a) $L=30$ and (b) $L=100$.}
    \label{fig:moments_mollifier_ic}
\end{figure}

\subsection*{Test 2}
For our second example, we initialize the solver with a discontinuous distribution that has support ranging from low wavenumbers to higher wavenumbers as depicted in Figure \ref{fig:disc_line_ic} and given in equation \eqref{eqn::disc_line_ic} below,
\begin{equation}\label{eqn::disc_line_ic}
    g(0,k) = \begin{cases}
    1 - \frac{1}{130}(k-20) & \text{if } 20 \leq k \leq 150 \\
    0 & \mathrm{ otherwise } .
\end{cases}
\end{equation}
For this test, {the truncation parameter are set to $L=200$ and $L=300$. The simulation is run until $T=1e+04$}. In this and the next test, the initial energy density profile is spread throughout the computational domain, increasing the total energy in the interval we consider. 
\begin{figure}
    \centering
    \includegraphics[width=0.75\textwidth]{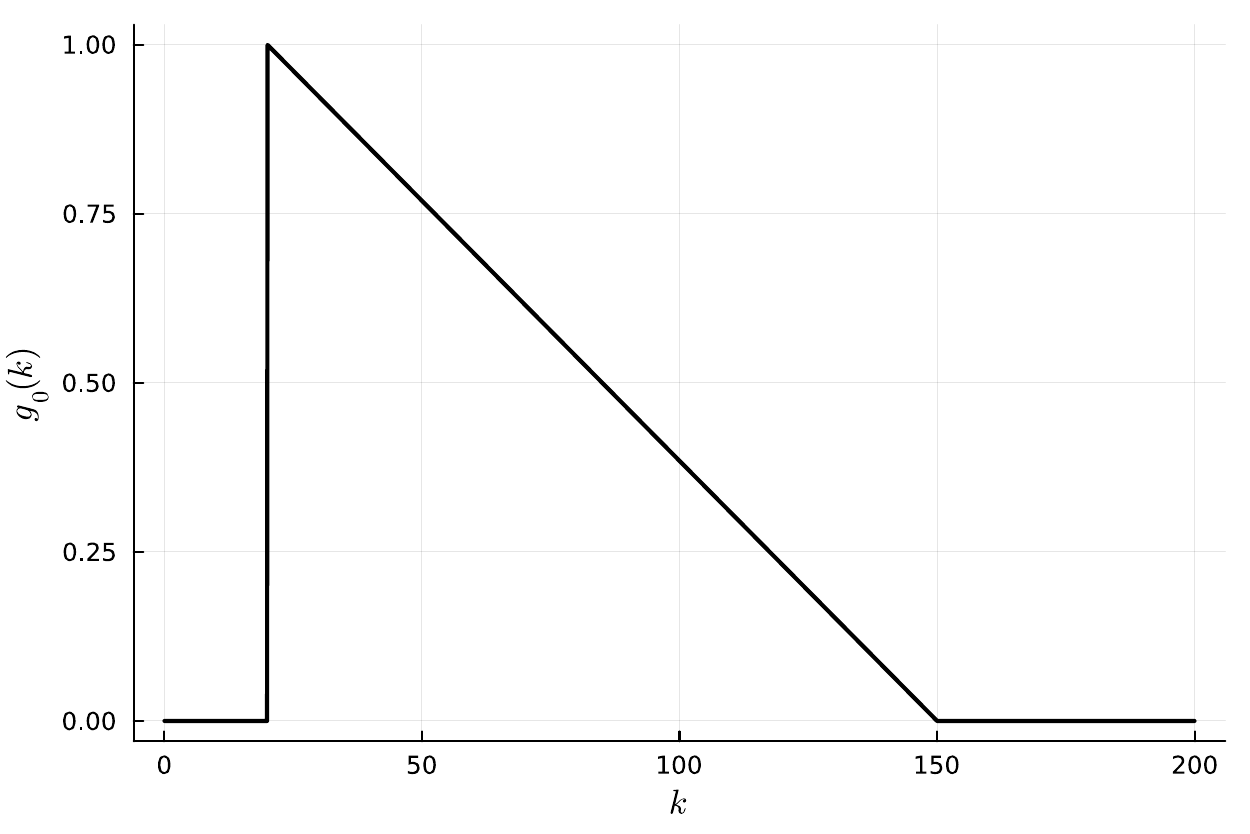}
    \caption{Initial condition corresponding to equation \eqref{eqn::disc_line_ic}.}
    \label{fig:disc_line_ic}
\end{figure}

\begin{figure}
    \centering
    \includegraphics[width=0.75\textwidth]{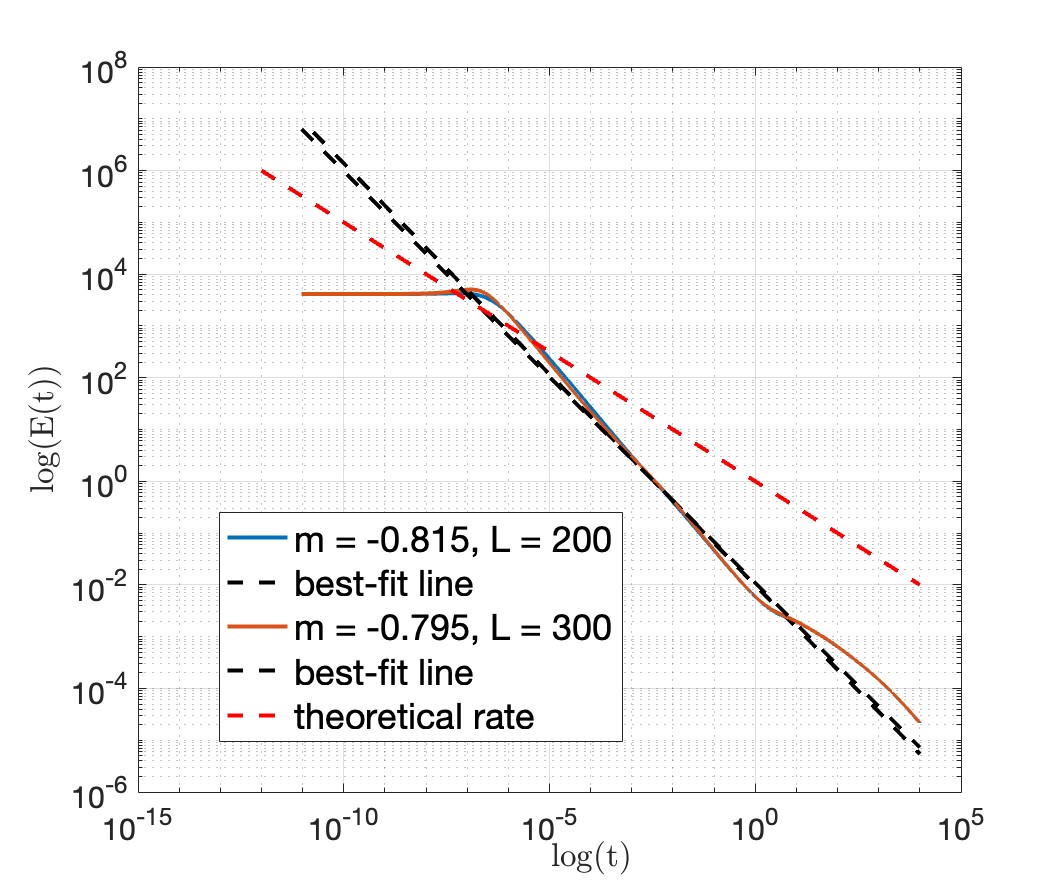}
    \caption{Decay rate of total energy corresponding to \eqref{eqn::disc_line_ic} with theoretical rate and best fit line through the data shown for reference.}
    \label{fig:total_energy_disc_line_ic}
\end{figure}

\begin{figure}
    \centering
    \begin{subfigure}{0.49\textwidth}
        \centering
        \includegraphics[width=\textwidth]{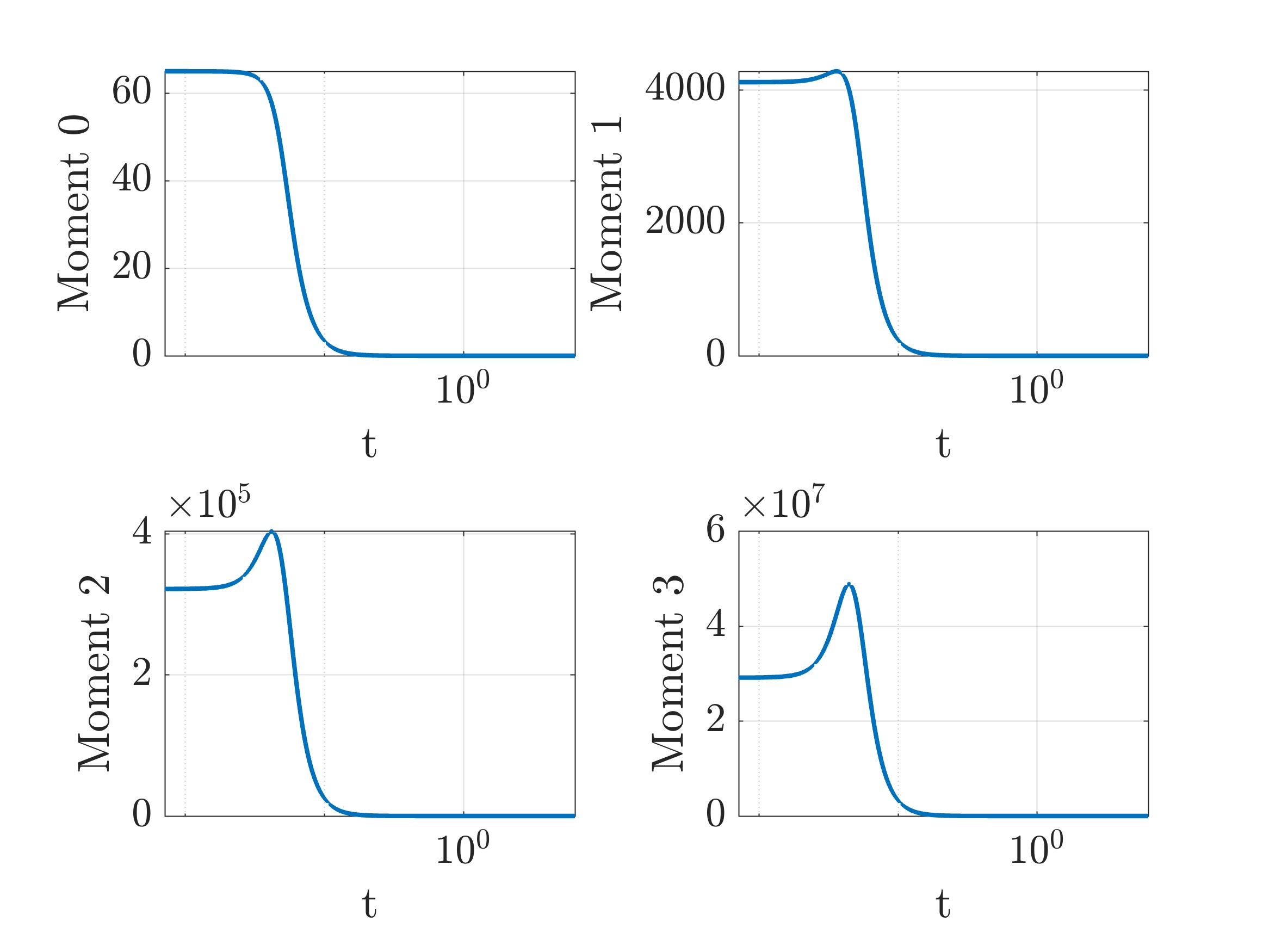}
        \caption{$L = 200$}
    \end{subfigure}
    \hfill
    \begin{subfigure}{0.49\textwidth}
        \centering
        \includegraphics[width=\textwidth]{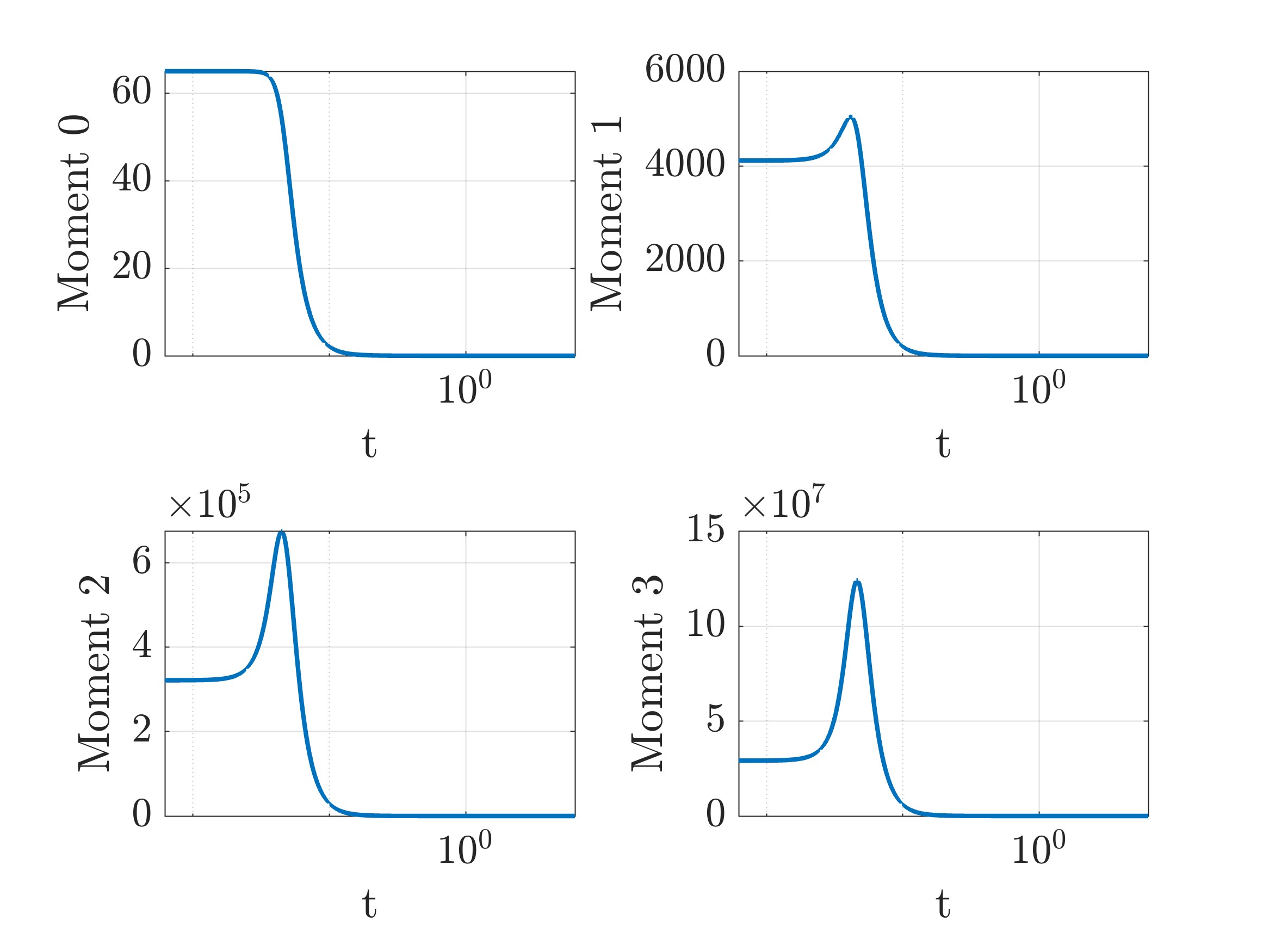}
        \caption{$L = 300$}
    \end{subfigure}
    \caption{Moments of the energy density for initial condition \eqref{eqn::disc_line_ic} with (a) $L=200$ and (b) $L=300$.}
    \label{fig:moments_disc_line_ic}
\end{figure}

% \begin{figure}
%     \centering
%     \includegraphics[width=0.75\textwidth]{}
%     \caption{Plot of $\Delta t$ versus time for simulation with initial distribution given by equation \eqref{eqn::disc_line_ic}.}
%     \label{fig:disc_line_dt}
% \end{figure}

\subsection*{Test 3}
Our last test is depicted in Figure \ref{fig:triple_bump_ic} and given by
\begin{equation}
\label{eqn::triple_bump_ic}
    g(0,k) = \frac{1}{10}\Bigg(\exp\Big(-\frac{(k-50)^2}{100}\Big) + \frac{1}{2}\exp\Big(-\frac{(k-75)^2}{100}\Big) + \exp\Big(-\frac{(k-100)^2}{100}\Big) \Bigg),
\end{equation}
for which we employ the same {value of the truncation parameters as in the preceding test, $L=200,300$, and run until $T=1e+04$ once more.  The total energy is shown in figure \ref{fig:total_energy_triple_bump_ic}.  Again, we see that there is good agreement with the theoretical bound and the computed decay rate for both $L$ values. }
\begin{figure}
    \centering
    \includegraphics[width=0.75\textwidth]{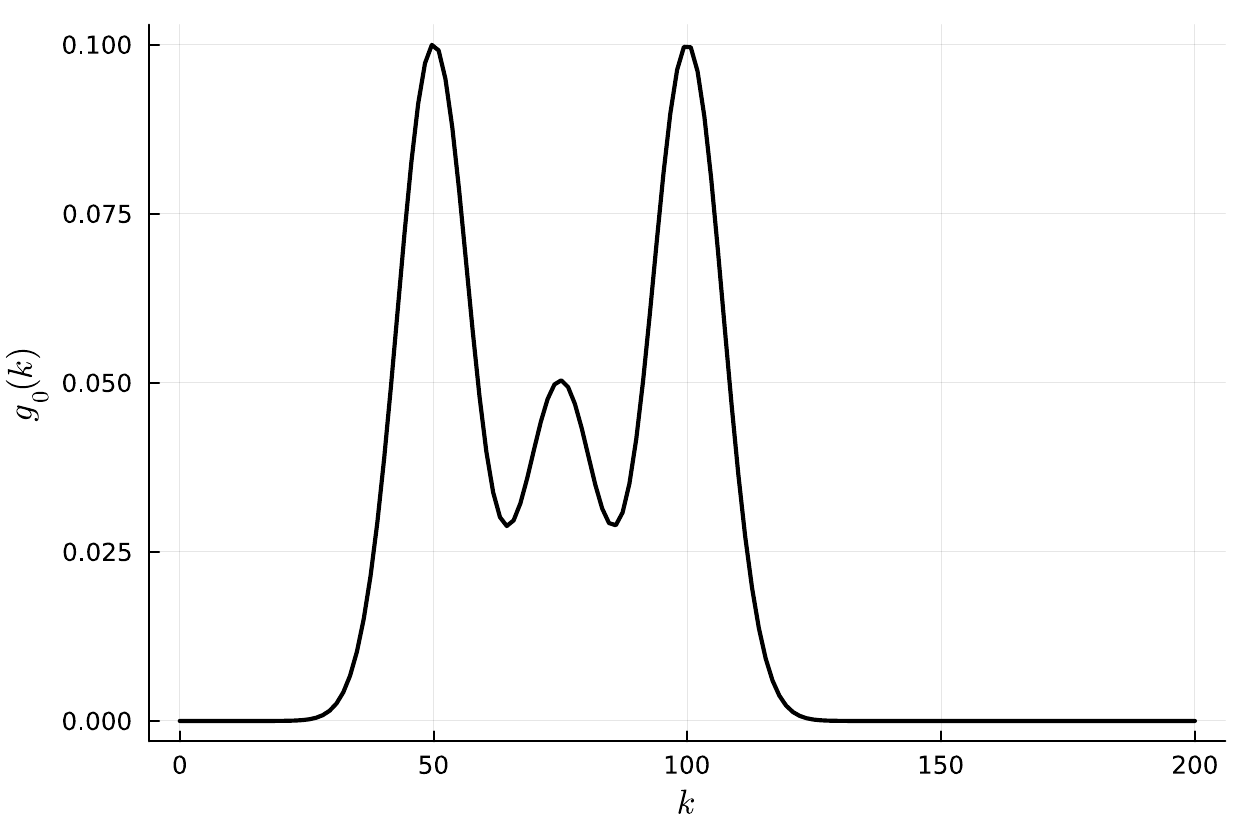}
    \caption{Initial condition corresponding to equation \eqref{eqn::triple_bump_ic}.}
    \label{fig:triple_bump_ic}
\end{figure}

\begin{figure}
    \centering
    \includegraphics[width=0.75\textwidth]{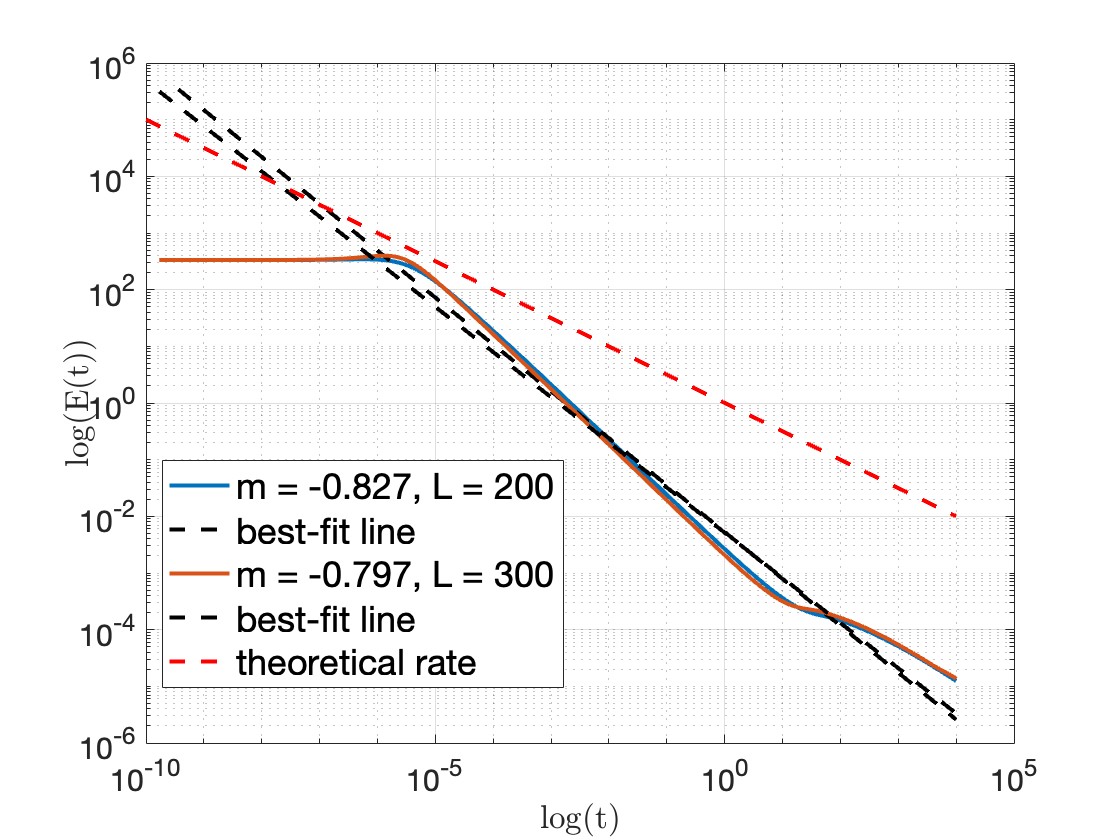}
    \caption{Decay rate of total energy corresponding to \eqref{eqn::triple_bump_ic} with theoretical rate and best fit line through the data shown for reference.}
    \label{fig:total_energy_triple_bump_ic}
\end{figure}

\begin{figure}
    \centering
    \begin{subfigure}{0.49\textwidth}
        \centering
        \includegraphics[width=\textwidth]{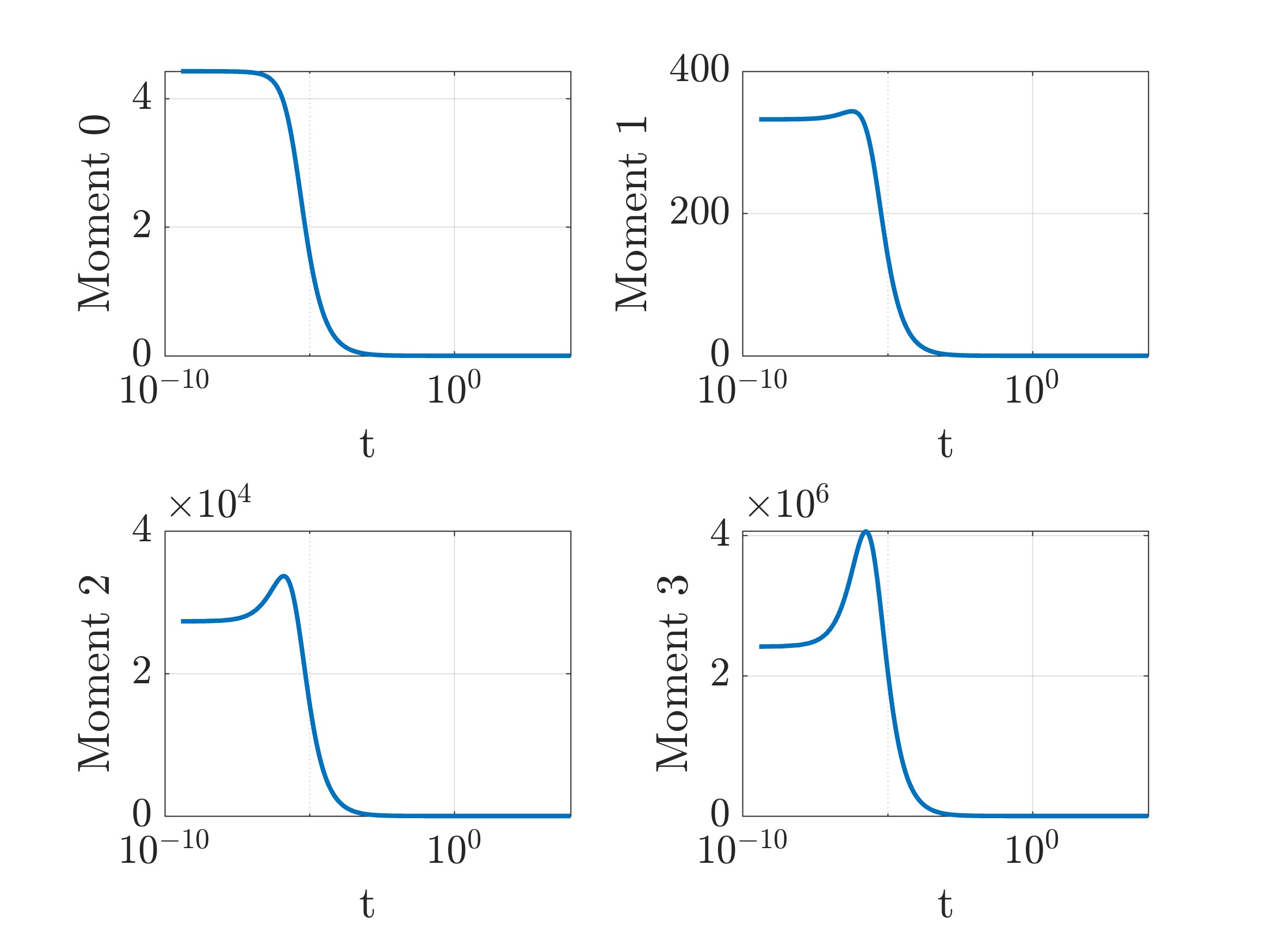}
        \caption{$L = 200$}
    \end{subfigure}
    \hfill
    \begin{subfigure}{0.49\textwidth}
        \centering
        \includegraphics[width=\textwidth]{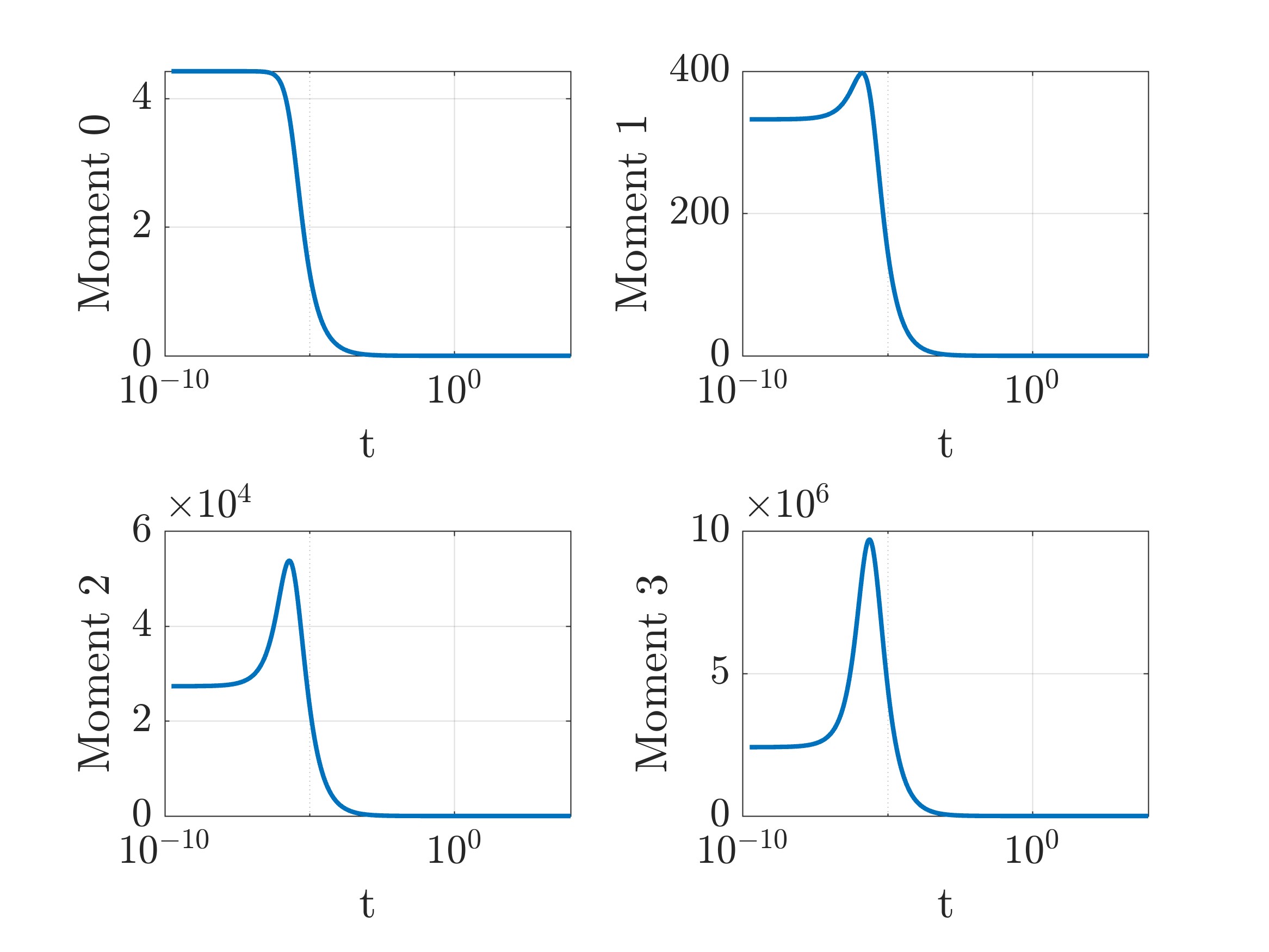}
        \caption{$L = 300$}
    \end{subfigure}
    \caption{Moments of the energy density for initial condition \eqref{eqn::triple_bump_ic} with (a) $L=200$ and (b) $L=300$.}
    \label{fig:moments_triple_bump_ic}
\end{figure}
{Further, we provide a plot in figure \ref{fig::decay_with_inset} of the total energy decay for $L=300$ with inset plots of $g(t,k)$ at a few snapshots in time.  The times chosen are at the loss of conservation, or onset of decay, and the beginning and end of the short interval where the energy again appears to be conserved.  This behavior is present in all of the preceding test cases as well. }
\begin{figure}
    \centering
    \includegraphics[width=1.\linewidth, height=.58\linewidth]{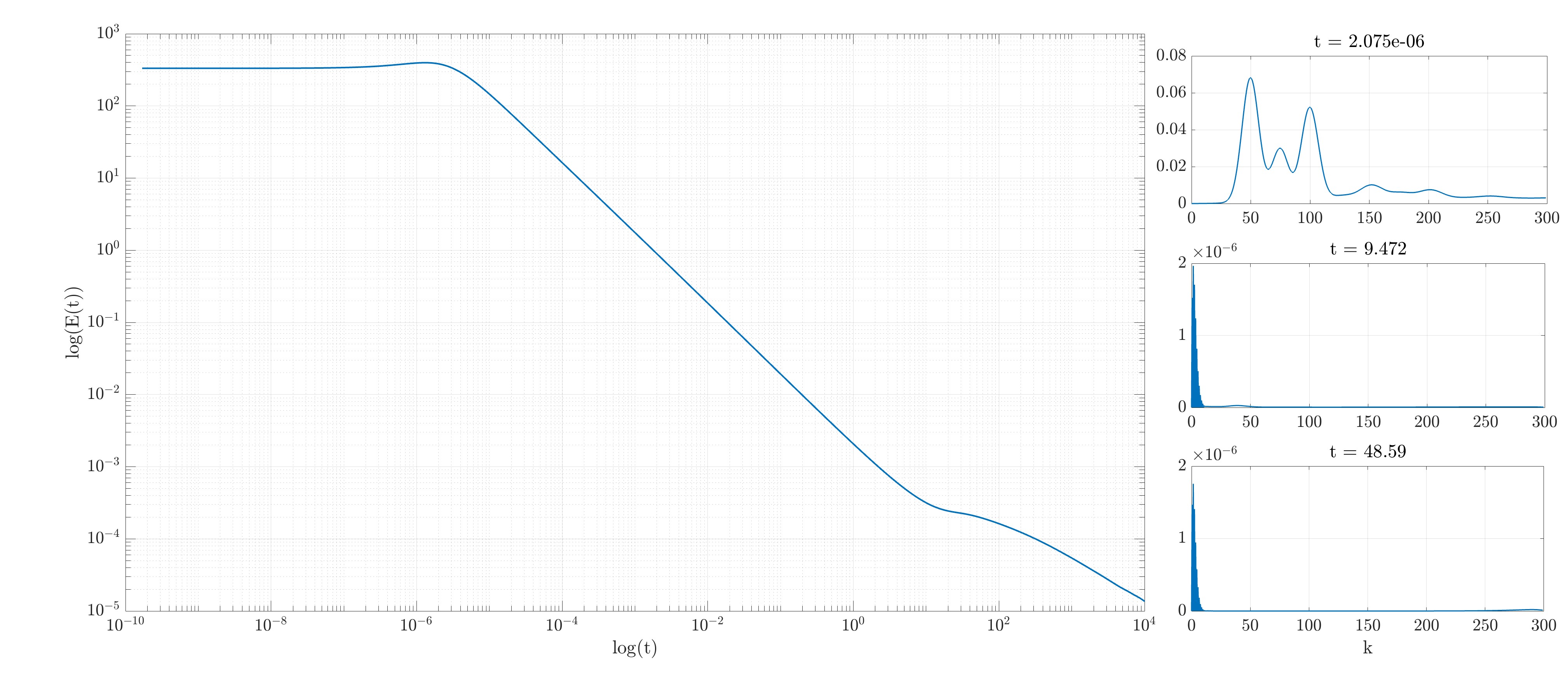}
    \caption{Total energy decay for initial condition \eqref{eqn::triple_bump_ic} with $L=300$.  The subfigures on the right show snapshots in time of the energy density $g(t,k)$.  The top figure shows the energy density just before the onset of decay, while the middle and bottom figures show the beginning and end of the plateau region that can also be observed for every other initial condition, though at different times.}
    \label{fig::decay_with_inset}
\end{figure}
{For this test, we provide an experimental convergence test.  We use a reference solution with $\Delta k = \frac{1}{20}$ with $L=200$.  Here we stop the calculation at $T=10$.  The grid is halved for each successive computation. Specifically, we set $\Delta k = \frac{1}{2}, \frac{1}{4}, \frac{1}{8},\frac{1}{16}$.  These coarse grid solutions are interpolated onto the fine grid.  In figure \ref{fig::triple_bump_convergence}, we show the relative $L^1$ and $L^\infty$ error with respect to the fine grid solution.  The evolution corresponding to initial condition \eqref{eqn::triple_bump_ic} is smooth and we obtain the claimed second order convergence.
\begin{figure}
    \centering
    \includegraphics[width=\linewidth]{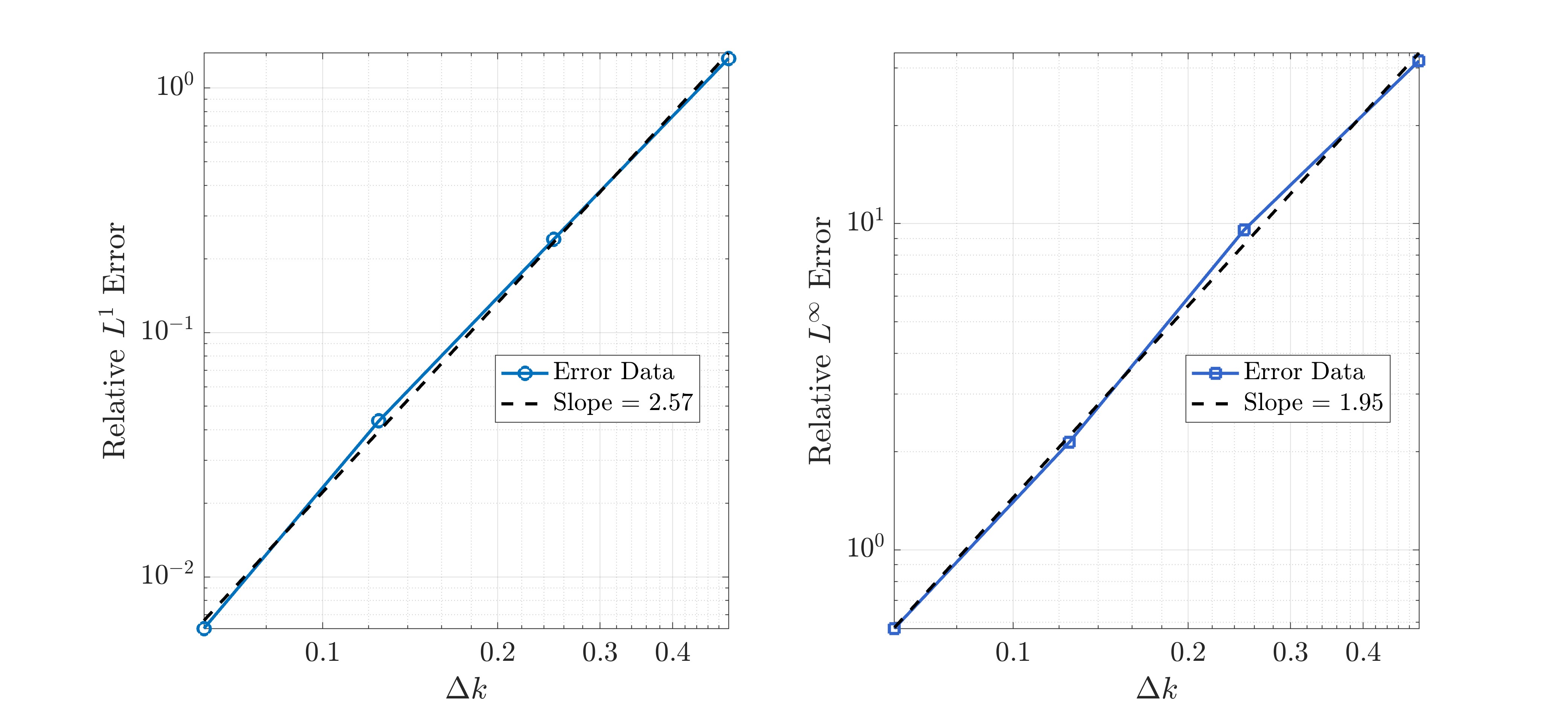}
    \caption{Convergence plots of using the relative $L^1$ and $L^\infty$ norm with respect to a fine grid solution.  Here, the simulation is carried out to $T=10$ with $L=200$.  We see that 2nd order convergence is obtained.}
    \label{fig::triple_bump_convergence}
\end{figure}}

% \begin{figure}
%     \centering
%     \includegraphics[width=0.75\textwidth]{}
%     \caption{Plot of $\Delta t$ versus time for simulation with initial distribution given by equation \eqref{eqn::triple_bump_ic}.}
%     \label{fig:triple_bump_dt}
% \end{figure}

\section{Conclusions}

In this article, we were able to extend the work found in \cite{waltontranFVS} from the simple case when only  the forward-cascade part of the collision operator is kept  to much more complicated situation when the complete collision operator is treated.  A conservative form for the energy density was derived and subsequently an implicit finite volume scheme was given.  The implicit finite volume scheme does not suffer from a restrictive stability condition on the time step as is true for the explicit scheme for the forward-cascade equation. This was achievable {due to a further} simplification at the discrete level of the collision-flux.
A future work will be dedicated to a 3-WKE tailored time-step control for higher-order time integration to ensure positivity for a given positive initial distribution.
In each numerical test, we verified that the energy was conserved locally in time on the finite intervals we considered and that the theoretical bound \eqref{Decomposition0} was adhered to with similar computed rates across tests.  This is in strong agreement with the results found in \cite{soffer2019energy}.  In this work, we restrict our study to the long-time asymptotic behavior of the $3$-wave kinetic equation. Different types of $n$-wave kinetic equations exhibit distinct long-time dynamics. The numerical study of these equations is an important line of research that has not yet been fully explored. Among recent developments, in the context of 4-wave kinetic equations, \cite{qi2025fast} presents a fast Fourier spectral approach allowing for efficient and accurate simulations.

%% The Appendices part is started with the command \appendix;
%% appendix sections are then done as normal sections
%% \appendix

%% \section{}
%% \label{}

%% If you have bibdatabase file and want bibtex to generate the
%% bibitems, please use
%%
 \bibliographystyle{elsarticle-num} 
 \bibliography{references.bib}

%% else use the following coding to input the bibitems directly in the
%% TeX file.

% \begin{thebibliography}{00}

% %% \bibitem{label}
% %% Text of bibliographic item

% \bibitem{}

%\end{thebibliography}
\end{document}